\documentclass[reqno,12 pt]{amsart}

\usepackage{amsfonts,amscd}
\usepackage{amsthm}
\usepackage{amssymb}
\usepackage{enumerate}
\usepackage{latexsym}
\usepackage{wasysym}
\usepackage{stmaryrd}
\usepackage{color}
\usepackage{url}
\usepackage{cite}
\usepackage{amsmath}
\usepackage{verbatim}
\usepackage{graphicx}
\usepackage{lineno}
\usepackage{xcolor}
\usepackage{multirow}
\usepackage{amssymb}

\usepackage{algorithm}
\usepackage{algorithmicx}
\usepackage{algpseudocode}
\usepackage{epsf}
\usepackage{geometry}
\usepackage{hyperref}
\definecolor{dark-blue}{rgb}{0,0,0.6}
\definecolor{Purple}{rgb}{0.2,0,0.25}
\hypersetup{breaklinks=true,
pagecolor=white,
colorlinks=true,
citecolor=dark-blue,
filecolor=black,%
linkcolor=Purple,%
urlcolor=blue
}

\theoremstyle{plain} 
\newtheorem{Thm}{Theorem}[section]
\newtheorem{Lem}[Thm]{Lemma}

\newtheorem{problem}[Thm]{Problem}

\theoremstyle{definition}
\newtheorem{remark}[Thm]{Remark}

\newtheorem{alg}[Thm]{Algorithm}
\numberwithin{equation}{section}

\newcommand{\wt}{\widetilde}

\newcommand{\Int}{\textnormal{int}}

\newcommand{\Id}{\textnormal{Id}}

\newcommand{\R}{\mathbb{R}}

\newcommand{\N}{\mathbb{N}}

\newcommand{\wh}{\widehat}

\newcommand{\dist}{\textnormal{dist}}

\newcommand{\bref}[1]{\textbf{\ref{#1}}} 
\newcommand{\beqref}[1]{\textbf{(\ref{#1})}} 

\begin{document}


\date{September 10, 2026}

\subjclass[2020]{90C25, 90C30, 47H10, 46N10, 47N10, 41A50}

\title[The best approximation tuple]{The best approximation tuple: an extension of the Cheney-Goldstein algorithm and results to the multiple sets case}

\author{Yair Censor}
\address{Department of Mathematics, University of Haifa, Mt. Carmel, Haifa 3498838, Israel}
\email{yair@math.haifa.ac.il}

\author{Daniel Reem}
\address{The Center for Mathematics and Scientific Computation (CMSC), University of Haifa, Mt. Carmel, Haifa 3498838, Israel}
\email{dream@math.haifa.ac.il}

\keywords{Best Approximation Pair (BAP) problem, Best Approximation Tuple (BAT) problem, Cheney-Goldstein theorem, Fixed point, orthogonal projection, proximity function, strictly convex set}

\maketitle

\begin{abstract}
In this paper we extend the algorithm and several results published in the celebrated 1959 paper of Cheney and Goldstein about the best approximation pair (BAP) problem in two separate directions. One is the consideration of more than two sets. The other is the ability to handle each set as an intersections of a finite family of sets. We call the resulting problem the ``Best Approximation Tuple (BAT) problem''. The fundamental observation that leads to this generalizations is to recognize and handle one set (the ``pivot set'') as different from the remaining sets (the ``satellite sets'') instead of seeking cycles as the minimizers of a target functional. This enable us to overcome a certain theoretical obstacle related to cycles and minimizers of general functionals. We prove the convergence of the algorithm to the unique solution of the problem in the Euclidean case with strictly convex and compact satellite sets. Because of the lack of Fej\'er monotonicity, our convergence analysis is not standard, and is based on almost unknown properties of orthogonal projections regarding equality and inequality in the definition of nonexpansiveness. 
\end{abstract}


\section{Introduction}\label{sec:Introduction}
\subsection{Background and contributions} In the classical best approximation pair (BAP) problem we are given two nonempty, closed, convex and disjoint subsets $\Omega_1$ and $\Omega_2$ located in a real Hilbert space $X$, and we want to find a pair of points, each from each subset, which realizes the distance between the subsets. In other words, we want to find $a\in \Omega_1$ and $b\in\Omega_2$ such that 
\begin{equation}\label{eq:d(Pmega_1,Omega_2)}
\|a-b\|=\dist(\Omega_1,\Omega_2):=\inf\{\|x-y\|\,|\, x\in \Omega_1, y\in\Omega_2\},
\end{equation}
namely, $(a,b)$ is a minimizer over $\Omega_1\times \Omega_2$ of the function $f:X^2\to [0,\infty)$ defined by $f(x,y):=\|x-y\|$ for all $(x,y)\in X^2$. The BAP problem goes back to the classical 1959 work of Cheney and Goldstein \cite{CheneyGoldstein1959jour} and, briefly, also to the paper \cite{Nicolescu1938jour} of Nicolescu from 1938. Numerous papers discussing this problem in various settings have been published, and the problem has found applications in signal processing, solid modeling, computer graphics, robotics, collision detection, computer aided design, virtual reality and more. For a very partial list of works see, for instance, 
 \cite{AharoniCensorJiang2018jour,BauschkeBorwein1993jour,BauschkeBorwein1994jour,KopeckaReich2004jour,KopeckaReich2012jour,BauschkeCombettesLuke2004jour,
BauschkeSinghWang2022jour,CensorMansourReem2024jour,Dax2006jour,LinGottschalk1998inproc,SatoHirataMaruyamaArita1996inproc,
SonYoonKimElber2020jour,JohnsonCohen1998inproc,PatogluGillespie2002inproc,Quinlan1994inproc,FanWangTongLiTang2024inproc,ChangChoiKimWang2011jour,
GilbertJohnsonKeerthi1988jour,ZeghloulRambeaud1996jour,CameronCulley1986inproc,EhmannLin2000inproc,LinManochaKim2018inbook,
Schwartz1981jour,DobkinKirkpatrick1985jour,Luo2014jour,Narang1991jour,Pai1974jour,Stiles1965b-jour,Xu1988jour,VoiseiZalinescu2011jour,CaseiroFacasVicenteVitoria2019jour}. For a review about the BAP problem (with over 120 citations) in various settings, as well as for many relevant results and applications, see our recent work \cite{ReemCensor2026prep-BAP}.

It is not immediately clear how to generalize the BAP problem to more than two sets. In fact, it has turned out there is a theoretical obstacle for a natural generalization. Indeed, we first recall the known fact, which is implicit in \cite{CheneyGoldstein1959jour} (and follows from \cite[Theorem 2]{CheneyGoldstein1959jour} and additional simple arguments), that $(a,b)\in \Omega_1\times\Omega_2$ is a BAP relative to $(\Omega_1,\Omega_2)$ if and only if $(a,b)$ is a 2-cycle, namely $a=P_1(b)$ and $b=P_2(a)$. Thus, given $m\in\N$, a natural candidate for being a generalization of the concept of a BAP to any $m+1\geq 2$ sets $\Omega_1,\ldots,\Omega_{m+1}$  is an $(m+1)$-cycle relative to $(\Omega_1,\Omega_2,\ldots,\Omega_{m+1})$, namely a tuple $(a^1,a^2,\ldots,a^{m+1})\in \Omega_1\times\Omega_2\times\cdots\times\Omega_{m+1}$ which satisfies $a^i=P_{i}(a^{i+1})$ for all $i\in \{1,2,\dots,m\}$ and $a^{m+1}=P_{m+1}(a^1)$. 

While there are various relevant results related to cycles, such as  \cite[Section 5]{BauschkeBorweinLewis1997inproc}, \cite[Theorem 2]{GubinPolyakRaik1967jour}, \cite[Theorem 1.6, Corollary 1.7]{Stiles1965b-jour}, \cite[Corollary 4.10, Corollary 4.11]{BehlingBello-CruzSantos2021jour} and \cite[Theorem 9]{AlwadaniBauschkeRevalskiWang2021jour}, none of them shows that when $m+1\geq 3$, then there is a certain functional such that the minimizers of this functional over $\Omega_1\times\Omega_2\times\ldots\times \Omega_{m+1}$ (for all nonempty, closed and convex sets $\Omega_i\subseteq X$, $i\in\{1,2,\ldots,m+1\}$) must be $(m+1)$-cycles relative to $(\Omega_1,\Omega_2,\ldots,\Omega_{m+1})$ and vice versa, as in the case of two sets mentioned above. 

Apparently this is not a coincidence, since a remarkable result of Baillon, Combettes and Cominetti \cite[Theorem 2.3]{BaillonCombettesCominetti2012jour} shows that when $X$ has dimension 2 or above (possibly infinite dimension), then when $m+1\geq 3$ there exists no functional $\Phi:X^{m+1}\to\R$ having the property that for all tuple $(\Omega_1,\Omega_2,\ldots,\Omega_{m+1})$ of nonempty, closed and convex sets $\Omega_i\subseteq X$, $i\in\{1,2,\ldots,m+1\}$, the set of $(m+1)$-cycles relative to $(\Omega_1,\Omega_2,\ldots,\Omega_{m+1})$ is the set of minimizers of $\Phi$ over $\Omega_1\times\Omega_2\times\cdots\times \Omega_{m+1}$. This theoretical obstacle has led to statements such as ``the Cheney and Goldstein...result of minimizing the distance between two disjoint sets cannot be extended to more than two sets'' \cite[p. 579, after Theorem 2.18]{CensorZaknoon2018jour} and ``The problem of best approximation pair relative to two sets cannot be extended to more than two sets in view of Baillon, Combettes and Cominetti’s result''  \cite[p. 581, Subsection 2.16]{CensorZaknoon2018jour}. 

We present here a way to bypass this theoretical obstacle by singling out and focusing our attention on one specific component of the tuple of sets which we call the ``pivot set''. Then we pose the problem of minimizing a natural functional, which is defined on $X$ and not on $X^{m+1}$, over the pivot set. 

More precisely, assume for the sake of simplicity that we single out and focus on the $(m+1)$-th index for the pivot set. If $ (\beta_i)_{i=1}^m$ are positive numbers satisfying $\sum_{i=1}^m\beta_i=1$, and if $P_i$ denotes the orthogonal projection onto any remaining sets $\Omega_i$, $i\in \{1,2,\ldots,m\}$ (which we call the ``satellite sets''), then we consider the following minimization problem, wherein $d(x,\Omega_i):=\inf\{\|x-y\|\,|\, y\in \Omega_i\}$ is the distance between the point $x$ and the set $\Omega_i$:

\begin{equation}\label{eq:MinOmega_h}
\min_{x\in\Omega_{m+1}}\;\frac{1}{2}\sum_{i=1}^m\beta_{i}d(x,\Omega_{i})^2=\min_{x\in\Omega_{m+1}}\frac{1}{2}\sum_{i=1}^m\beta_{i}\|x-P_i(x)\|^2.
\end{equation}
Once we have a solution $x^*\in\Omega_{m+1}$ to \beqref{eq:MinOmega_h} at our hands, we consider the tuple $(P_1(x^*),P_2(x^*),\ldots,P_{m}(x^*),x^*)$ as the Best Approximation Tuple (BAT). This tuple is usually not an $(m+1)$-cycle when $m+1>2$, but the case $m+1=2$ reduces to the BAP problem considered by Cheney and Goldstein \cite{CheneyGoldstein1959jour}. 

Indeed, in this case $\beta_1$ must be 1, and $x^*\in\Omega_2$ solves \beqref{eq:MinOmega_h} if and only if $x^*$ solves the minimization problem $\min_{x\in\Omega_{2}}\;\frac{1}{2}d(x,\Omega_{1})^2$, i.e., if and only if $x^*$ is a minimizer of the minimization problem $\min_{x\in\Omega_{2}}d(x,\Omega_{1})$. Since  \cite[Theorem 2]{CheneyGoldstein1959jour} ensures that $x^*$ solves the minimization problem $\min_{x\in\Omega_{2}}d(x,\Omega_{1})$ if and only if $x^*=P_2(P_1(x^*))$, by recalling what we explained a bit after \beqref{eq:d(Pmega_1,Omega_2)}, namely that $(a,b)\in \Omega_1\times\Omega_2$ is a BAP relative to $(\Omega_1,\Omega_2)$ if and only if $(a,b)$ is a 2-cycle, and by letting  $a:=P_1(x^*)$ and $b:=x^*$, we conclude that $x^*\in\Omega_2$ solves \beqref{eq:MinOmega_h} if and only if $(P_1(x^*),x^*)$ is a BAP and a BAT relative to $(\Omega_1,\Omega_2)$. 

Our generalization \beqref{eq:MinOmega_h} of the BAP problem was inspired by the, so-called, ``inconsistent convex feasibility problem with soft and hard constraints''. Before presenting this problem, recall first the classical convex feasibility problem (CFP): we are given a family $(\Omega_i)_{i\in I}$ of nonempty, closed and convex subsets in a real Hilbert space $X$, (where $I\neq \emptyset$ and it is finite or infinite) and our goal is to find a feasible point, that is, a point which belongs to the intersection $\cap_{i\in I}\Omega_i$. See, for example,  \cite{BauschkeBorwein1996jour,ButnariuCensorGurfilHadar2008jour,ByrneCensor2001jour,Cegielski2012book,CensorCegielski2015jour,CensorReem2015jour,CensorZenios1997book,Combettes1996jour(CFP),GubinPolyakRaik1967jour}
for related results, algorithms and applications. 

There are, however, various real-world and theoretical instances in which either it is not clear in advance that $\cap_{i\in I}\Omega_i\neq \emptyset$, or it is known that $\cap_{i\in I}\Omega_i=\emptyset$ and yet one wants to find a point which somehow generalizes the notion of a feasible point. A typical example for this is the inconsistent convex feasibility problem with soft and hard constraints \cite{CombettesBondon1999jour}. In this problem the sets $\Omega_{i},i\in I$ are divided into two groups: one group is the hard constraints group $\Omega_i$, $i\in I_{\textnormal{hard}}\neq\emptyset$, and the second group is the soft constraints group $\Omega_i$, $i\in I_{\textnormal{soft}}\neq\emptyset$, where $I_{\textnormal{hard}}\cup I_{\textnormal{soft}}=I$, $I_{\textnormal{hard}}\cap I_{\textnormal{soft}}=\emptyset$, and $I$ has at least two indices.  The goal is to find a point $x^* \in \cap_{i\in I_{\textnormal{hard}}}\Omega_i$ (assumed or known to be nonempty) which violates as little as possible the constraints $\Omega_i$, $i\in I_{\textnormal{soft}}$. More precisely, we want to find  a point $x^* \in \cap_{i\in I_{\textnormal{hard}}}\Omega_i$ which is a minimizer, over $\cap_{i\in I_{\textnormal{hard}}}\Omega_i$, of the functional $\Phi_{\textnormal{soft}}: X\to \R$ defined by 
\begin{equation}\label{eq:Phi_soft}
\Phi_{\textnormal{soft}}(x):=\frac{1}{2}\sum_{i\in I_{\textnormal{soft}}}\beta_id(x,\Omega_i)^2=\frac{1}{2}\sum_{i\in I_{\textnormal{soft}}}\beta_i\|x-P_i(x)\|^2, \quad \forall x\in X, 
\end{equation}
where, for each  ${i\in I_{\textnormal{soft}}}$, $P_i:X\to X$ is the orthogonal projection onto $\Omega_i$, $\beta_i>0$ and $\sum_{i\in I_{\textnormal{soft}}}\beta_i=1$. 

The functional $\Phi_{\textnormal{soft}}$ is a proximity function (see, e.g., \cite[p. 28]{Cegielski2012book}). It has the property that $\Phi_{\textnormal{soft}}(x)=0$ if and only if $x\in \cap_{i\in I_{\textnormal{soft}}}\Omega_i$, and $\Phi_{\textnormal{soft}}(x)>0$ if and only if $x\notin \Omega_i$ for at least one index $i\in I_{\textnormal{soft}}$. One can think about $\Phi_{\textnormal{soft}}$ as a cost function which assigns to each $x\in X$ the price tag $\Phi_{\textnormal{soft}}(x)$ which represents by how much $x$ violates the constraints: the higher the violation, the higher the price, and we want to find a point $x^* \in \cap_{i\in I_{\textnormal{hard}}}\Omega_i$ having the least price tag. Alternatively, $\Phi_{\textnormal{soft}}$ can be thought of as being an energy function which assigns to each $x\in X$ the energy level $\Phi_{\textnormal{soft}}(x)$, and our goal is to find a point $x^* \in \cap_{i\in I_{\textnormal{hard}}}\Omega_i$ having the lowest possible energy level. 

The inconsistent convex feasibility problem with soft and hard constraints has applications in signal processing \cite{CombettesBondon1999jour,GoldburgMarks1985jour,YoulaVelasco1986jour}, network bandwidth allocation problem \cite{Iiduka2012jour-a}, power control \cite{Iiduka2012jour-b}, network analysis \cite{Iiduka2020jour}, sound velocity reconstruction \cite{HuangLi2004jour} and more; see also \cite{AttouchPeypouquetRedont2014jour,Cegielski2012book,CensorElfvingKopfBortfeld2005jour,CensorBortfeldMartinTrofimov2006jour,
CensorZaknoonZaslavski2021jour,Combettes2003jour,Combettes2004jour,CombettesWajs2005jour,CombettesPesquet2008jour,Combettes2013jour,CombettesGlaudin2019jour,CombettesWoodstock2022jour} for various algorithms and other results related to this problem, and see \cite{CensorZaknoon2018jour} for a general review related to the  inconsistent convex feasibility problem.

Back to \beqref{eq:MinOmega_h}, one can see that it is a particular case of inconsistent convex feasibility problem with soft and hard constraints where $I_{\textnormal{hard}}=\{m+1\}$ and $I_{\textnormal{soft}}=\{1,2,\ldots,m\}$. Of course, we could let  $I_{\textnormal{hard}}:=\{h\}$ and  $I_{\textnormal{soft}}:=I\backslash\{h\}$ for any $h\in \{1,2,\ldots,m+1\}$, and by doing this, we would have obtained a different functional to be minimized and a different generalization of the BAP problem. 

In addition to extending the BAP problem to any finite number of sets (greater than or equal to 2), we also extend the problem in another aspect, by considering sets $\Omega_i$ having the property that each of them is an intersection of finitely many sets, namely $\Omega_i=\cap_{j=1}^{m_i}\Omega_{i,j}$ for some sets $\Omega_{i,j}$, where $i\in \{1,2,\ldots,m\}$, $m_i\in\N$ and $j\in \{1,2,\ldots,m_i\}$. Such a scenario appears, for instance, in bandwidth allocation problems \cite[Subsections 2.1 and 2.2]{Iiduka2012jour-a} and in reconstruction of sound velocity distribution \cite[Sections III, IV]{HuangLi2004jour}. See also \cite[Fig. 1]{CombettesBondon1999jour} and \cite[Subsection 6.1]{Combettes1996jour(CFP)}. 

Our extension, mentioned above, is what we call the Best Approximation Tuple (BAT) problem. Inspired by the ``Alternating Simultaneous Halpern--Lions--Wittmann--Bauschke (A-S-HLWB)'' algorithm that we presented in \cite{CensorMansourReem2024jour}, and by other material which appears there, we formulate precisely the BAT problem (Problem \bref{prob:The-Multi-Sets-Generalized-1} below), formulate conditions which ensure the existence and uniqueness of a solution to this problem, present an algorithmic scheme for solving the problem (Algorithm \bref{alg:MSGBAP} below), and prove its convergence under some reasonable assumptions (Theorem \bref{thm:Main} below; see also Remark \bref{rem:Better_Than_whP} for an extension), among them that the space is Euclidean and the satellite sets are strictly convex and compact. 

Algorithm \bref{alg:MSGBAP} below has the advantage that instead of projecting onto the individual sets $\Omega_i$, $i\in\{1,2,\ldots,m\}$, a task which may be demanding from the computational point of view, one projects onto the building bricks $\Omega_{i,j}$, $i\in \{1,2,\ldots,m\}$, $j\in \{1,2,\ldots,m_i\}$ which induce the intersection set $\Omega_i$, $i\in\{1,2,\ldots,m\}$, a task which is less expensive, particularly when these building bricks have a simple form. 

Because of the lack of Fej\'er monotonicity, our convergence analysis is not standard, and is based on almost unknown properties of orthogonal projections regarding equality and inequality in the definition of nonexpansiveness: see Lemma \bref{lem:P(x)-P(y)=x-y}, Remark \bref{rem:StrictlyNonexpansive}, Lemma \bref{lem:Rectangle} and Lemma \bref{lem:|P_S(x)-P_S(y)|<|x-y|} below. These properties allow us to derive strict estimates at crucial stages (such as the inequality  \beqref{eq:r_epsilon} and the inequality $r_{\varepsilon}+\|P_i(x^k)-P_i(x^*)\|<\varepsilon$ for all $i\in\{1,2\ldots,m\}$ mentioned below  \beqref{eq:x^(k+1)-x^*}). 

\subsection{Paper layout:} In Section \bref{sec:Preliminaries} we present necessary terminology and notation needed for the rest of the paper. In Section \bref{sec:The-Multi-Sets-Generalized} we present precisely the BAT problem, the algorithm for solving it, the main existence, uniqueness and convergence theorem (Theorem \bref{thm:Main}), and we also compare briefly our algorithm and results to the ones presented by Cheney and Goldstein \cite{CheneyGoldstein1959jour}. In Section \bref{sec:Auxiliary} we formulate and prove various auxiliary assertions, and in Section \bref{sec:Convergence-analysis} we present the proof of Theorem \bref{thm:Main}. 

\section{Preliminaries}\label{sec:Preliminaries}
Our setting is a real Hilbert space $(X,\|\cdot\|)$ with an inner product $\langle \cdot,\cdot \rangle$ and $X\neq\{0\}$, although some of the definitions below hold in a normed space or a metric space setting, and from time to time we will also consider these and other (perhaps more general) settings. 

Given a nonempty subset $S\subseteq X$ and an operator $F:S\to X$, we say that $F$ is nonexpansive on $S$ if $\|F(x)-F(y)\|\leq \|x-y\|$ for all $(x,y)\in S^2$. We say that $F$ is contractive on $S$ in the sense of Edelstein, or Edelstein-contractive, or simply contractive (following Edelstein \cite[p. 74]{Edelstein1962jour}), if $\|F(x)-F(y)\|<\|x-y\|$ whenever $x\neq y$ and $(x,y)\in S^2$. Note that in \cite[p. 69, Definition 4.1(iii)]{BauschkeCombettes2017book} a different term is used for describing Edelstein-contractive operators: ``strictly nonexpansive'' (we chose to not use this term because of a reason which is explained in Remark 
\bref{rem:StrictlyNonexpansive} below). An Edelstein-contractive mapping on $S$ is nonexpansive on $S$ since given $(x,y)\in S^2$, obviously $\|F(x)-F(y)\|\leq\|x-y\|$ whenever $x=y$ or $x\neq y$.

Given $\nu>0$, we say that $F$ is $\nu$-firmly nonexpansive on $S$ if $\|F(x)-F(y)\|^2\leq \|x-y\|^2-\nu\|(x-y)-(F(x)-F(y))\|^2$ for all $(x,y)\in S^2$. If  $F$ is $1$-firmly nonexpansive on $S$, then we also say that $F$ is firmly nonexpansive. Various examples of $\nu$-firmly nonexpansive operators for $\nu>0$  can be found in \cite[Chapter 2, Section 2.2]{Cegielski2012book}.
 
If $S$ is nonempty, closed and convex, then it is well-known \cite[Theorem 1.2.3 on p. 18]{Cegielski2012book} that for all $x\in X$ there exists a unique point in $S$, denoted by $P_S(x)$, such that $\|x-P_S(x)\|=d(x,S):=\inf\{\|x-y\|\,|\,y\in S\}$. We call the operator $P_S:X\to S$, which assigns to each $x\in X$ the unique point $P_S(x)$, the orthogonal projection onto $S$ (also called best approximation projection, metric projection, projector, and proximity map). It is well known that $P_S$ is firmly nonexpansive \cite[Theorem 2.2.21(iii) on p. 76 and Theorem 2.2.10(v) on p. 70]{Cegielski2012book} and satisfies the so-called Kolmogorov property $\langle x- P_S(x), y-P_S(x)\rangle\leq 0$ for all $x\in X$ and $y\in S$ (see, e.g., \cite[Theorem 2.2.21(ii) on p. 76 and Theorem 2.2.5 on p. 67]{Cegielski2012book} or \cite[The Lemma on p. 448]{CheneyGoldstein1959jour}). 
 
We say that $S$ is strictly convex if for all distinct points $a\in S$ and $b\in S$, the open interval $(a,b)$ connecting $a$ and $b$ is contained in the interior $\Int(S)$ of $S$. We denote by $\Id$ the identity operator on $X$ which satisfies $\Id(x)=x$ for each $x\in X$. Given $x\in X$ and $r>0$, we denote by $B[x,r]:=\{y\in X\,|\,\|y-x\|\leq r\}$ and $B(x,r):=\{y\in X\,|\, \|y-x\|<r\}$ the closed and open balls, respectively, of radius $r$, about $x$ (and $B[x,0]:=\{x\}$).

Given $m\in\N$ and a vector $\beta:=(\beta_i)_{i=1}^m\in\R^m$, we say that $\beta$ is a ``positive weight vector'' if $\beta_{i}>0$ for all $i\in\{1,2,\ldots,m\}$ and $\sum_{i=1}^{m}\beta_{i}=1$. A sequence $(\tau_{k})_{k=0}^{\infty}$ of real numbers is said to be a ``steering parameter sequence'' whenever the following conditions hold: $\tau_k\in (0,1)$ for all $k\in \N\cup\{0\}$,  $\lim_{k\to\infty}\tau_{k}=0$, $\sum_{k=0}^{\infty}\tau_{k}=\infty$, and $\sum_{k=0}^{\infty}\left|\tau_{k+1}-\tau_{k}\right|<+\infty$. For example, the sequence  $(\tau_{k})_{k=0}^{\infty}$ which satisfies $\tau_{k}:=1/(k+1)$ for all $k\in\mathbb{N}\cup\{0\}$ is a steering parameter sequence. 

\section{The best approximation tuple problem}\label{sec:The-Multi-Sets-Generalized}
In this section we formulate the BAT problem (Subsection \bref{subsec:Problem-definition}), our algorithm for solving it (Subsection \bref{subsec:The-Multi-Sets-Generalized}), and the main existence, uniqueness and convergence theorem (Subsection \bref{subsec:ConvergenceTheorem}). In Subsection \bref{subsec:Comparison} we make a short comparison between our algorithm and results to the ones obtained by Cheney and Goldstein \cite{CheneyGoldstein1959jour}.

\subsection{Problem formulation}\label{subsec:Problem-definition} (See Figure \bref{fig:MSGBAP-Illustration} for an illustration.)
\begin{problem}\label{prob:The-Multi-Sets-Generalized-1}
\textbf{The Best Approximation Tuple (BAT) problem:}  In a real Hilbert space $X$ we are given $m\in \N$ sets $\left(\Omega_{i}\right)_{i=1}^{m}$ (which we call ``the satellite sets''), each of which is itself a nonempty intersection of a given family $\left(\Omega_{i,j}\right)_{j=1}^{m_{i}}$ of $m_{i}\in \N$ closed and convex subsets of $X$, i.e., $\Omega_{i}:=\cap_{j=1}^{m_{i}}\Omega_{i,j}\neq\emptyset$ for all $i\in\{1,2,\ldots,m\}$. We are also given another set (which we call ``the pivot set)'' $\Omega_{m+1}$ which is nonempty, closed and convex, and a positive weight vector $(\beta_i)_{i=1}^m$. The goal of the ``Best Approximation Tuple'' (BAT) problem is to solve the following constrained minimization problem: 
\begin{equation}\label{eq:MinProb}
\min_{x\in\Omega_{m+1}}\;\frac{1}{2}\sum_{i=1}^{m}\beta_{i}(d(x,\Omega_{i}))^2.
\end{equation}
In other words, if $\Phi: X\to \R$ is defined by 
\begin{equation}\label{eq:Phi}
\Phi(x):=\frac{1}{2}\sum_{i=1}^m\beta_id(x,\Omega_i)^2=\frac{1}{2}\sum_{i=1}^m\beta_i\|x-P_i(x)\|^2, \quad \forall x\in X, 
\end{equation}
where $P_i:X\to X$ is the orthogonal projection onto $\Omega_i$, $i\in\{1,2,\ldots,m\}$, then the problem is to provide conditions which ensure the existence and perhaps uniqueness of a minimizer of $\Phi$ over $\Omega_{m+1}$, and to present algorithms for finding such a minimizer. 
\end{problem}

\begin{figure}[htb]
\begin{minipage}{1\textwidth}
\begin{center}{\includegraphics[trim=170 90 60 80, clip=true, scale=0.55]{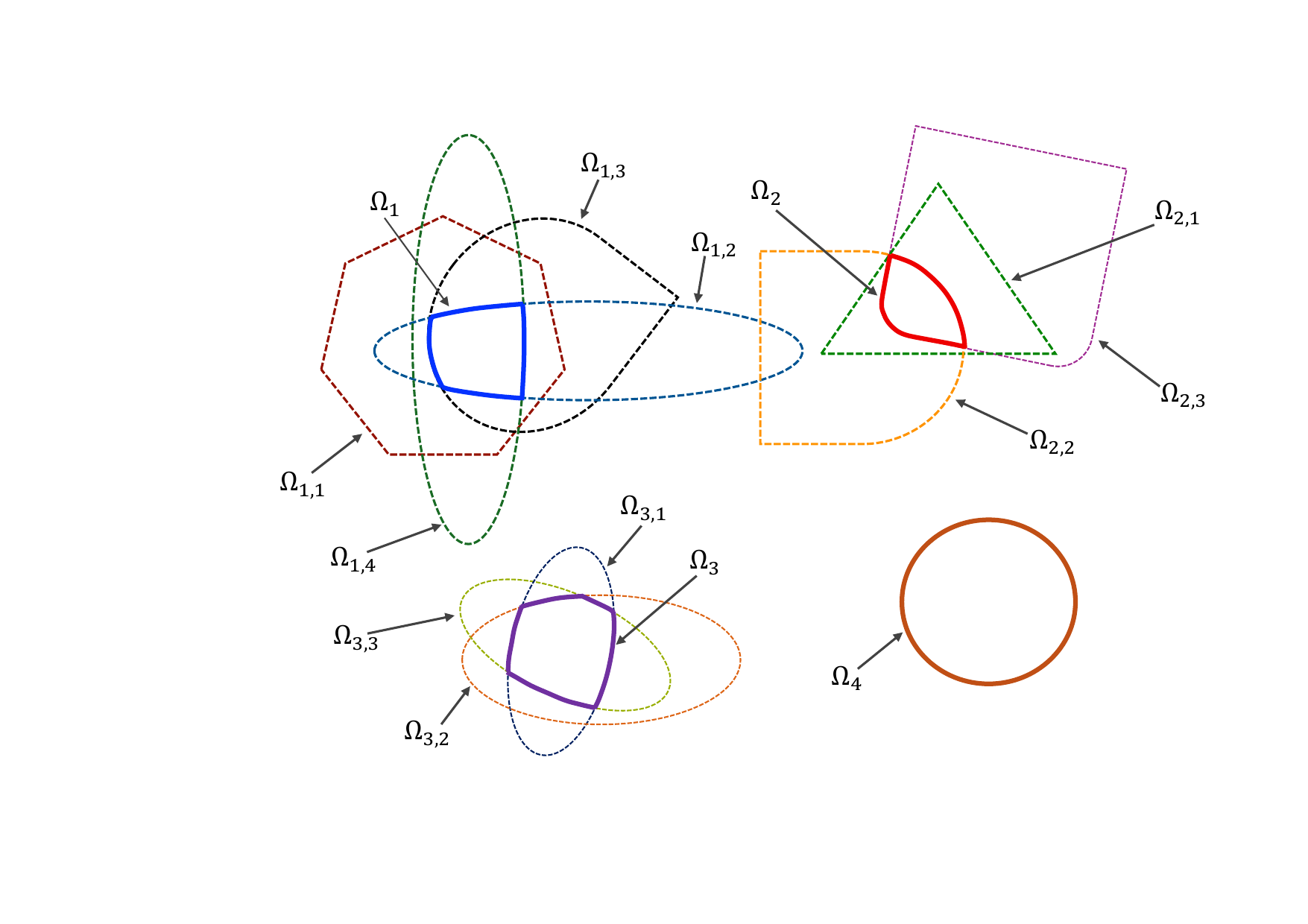}}
\end{center}
 \caption{Illustration of Problem \bref{prob:The-Multi-Sets-Generalized-1} in the Euclidean plane. Here $m=3$, $m_1=4$, $m_2=3$, $m_3=3$, the boundaries of the sets $\Omega_1,\Omega_2,\Omega_3,\Omega_4$ are represented by solid lines, and the boundaries of the sets $\Omega_{i,j}$ are represented by dashed lines, $i\in \{1,2,\ldots,m\}$, $j\in \{1,2,\ldots,m_i\}$. The satellite sets are $\Omega_1,\Omega_2,\Omega_3$ and the pivot set is $\Omega_4$. }
\label{fig:MSGBAP-Illustration}
\end{minipage}
\end{figure}

\subsection{The Multiple-Sets Best Approximation Tuple algorithm\label{subsec:The-Multi-Sets-Generalized}}

Here we propose a projection-type algorithm for the solution of the BAT problem. 

\begin{alg}\label{alg:MSGBAP} {\bf (Algorithm for the BAT problem)}\\
\begin{enumerate}
\item \noindent\textbf{Input: } A real Hilbert space $(X,\|\cdot\|)$, a natural number $m$, a set of $m$ natural numbers $m_i$ and sequences $(\tau_{i,t})_{t=0}^{\infty}$ of steering parameters for each $i\in\{1,2,\ldots,m\}$, positive weight vectors $w_i=(w_{i,j})_{j=1}^{m_i}$ where $i\in\{1,2,\ldots,m\}$, closed and convex subsets $\Omega_i$ (the satellite sets) and $\Omega_{i,j}$ such that $\Omega_i=\cap_{j=1}^{m_i}\Omega_{i,j}\neq\emptyset$ for all $i\in\{1,2,\ldots,m\}$ and $j\in \{1,2,\ldots,m_j\}$, a nonempty closed and convex subset $\Omega_{m+1}$ (the pivot set), a positive weight vector $\beta=(\beta_i)_{i=1}^{m}$, and two sequences $(\lambda_k)_{k=0}^{\infty}$ and $(\gamma_k)_{k=0}^{\infty}$ of real numbers in $(0,1]$.\vspace*{0.1cm}   

\item \noindent\textbf{Initialization}: Choose an arbitrary starting point \textit{$x^{0}\in X$}.\vspace*{0.1cm}

\item \noindent\textbf{Iterative Step}: For each $k\in\N\cup\{0\}$ and $i\in\{1,2,\ldots,m\}$, define
\begin{equation}\label{eq:ApproxProjection}
\wh{P}_{i,k}(x):=\prod_{t=0}^{k}\left(\tau_{i,t}x+(1-\tau_{i,t})\sum_{j=1}^{m_i}w_{i,j}P_{i,j}(x)\right), \quad \forall x\in X,
\end{equation}
\begin{equation}\label{eq:G_k}
G_k(x):=(1-\lambda_{k})x+\lambda_{k}P_{m+1}\left((1-\gamma_k)x+\gamma_k\sum_{i=1}^{m}\beta_i\wh{P}_{i,k}(x)\right),\quad \forall x\in X,
\end{equation}
where $P_{i,j}$ is the orthogonal projection onto the closed and convex subset $\Omega_{i,j}$, $i\in\{1,2,\ldots,m\}$ and $j\in \{1,2,\ldots,m_j\}$ and $P_{m+1}$ is the orthogonal projection onto the pivot set $\Omega_{m+1}$. Now, given a current iterate $x^k$, $k\in\N\cup\{0\}$, define the next iterate $x^{k+1}$ by the following iterative process: 
\begin{equation}\label{eq:GBAP}
x^{k+1}:=G_k(x^k)=(1-\lambda_{k})x^k+\lambda_{k}P_{m+1}\left((1-\gamma_k)x^k+\gamma_k\sum_{i=1}^{m}\beta_i\wh{P}_{i,k}(x^k)\right).
\end{equation}
\end{enumerate}
\end{alg}

\begin{remark}
\begin{enumerate}[(i)]
\item The vector $(\beta_i)_{i=1}^m$ in the functional $\Phi$ from \beqref{eq:Phi} is assumed to be a positive weight vector. This is not a real restriction as long as the coefficients $\beta_i$, $i\in \{1,2,\ldots,m\}$ are assumed to be positive. Indeed, suppose that instead of a positive weight vector we just assume that $(\beta_i)_{i=1}^m$ is a vector of positive numbers. Then by letting $\overline{\beta}_i:=\beta_i/(\sum_{j=1}^m\beta_j)$ for each $i\in\{1,2,\ldots,m\}$ and $\overline{\Phi}(x):=\frac{1}{2}\sum_{i=1}^m \overline{\beta}_i d(x,\Omega_i)^2$, $x\in X$, we see that $(\overline{\beta}_i)_{i=1}^m$ is a weight vector, and an immediate verification shows that $\overline{\Phi}$ and $\Phi(x):=\frac{1}{2}\sum_{i=1}^m \beta_i d(x,\Omega_i)^2$, $x\in X$, have the same minimizers over any subset $S$ of $X$. 
\item The iterative scheme \beqref{eq:GBAP} is inspired by a different iterative scheme presented in \cite[(20) on p. 2463]{CombettesBondon1999jour}, whose goal is to solve the inconsistent convex feasibility problem with soft and hard constraints, where each subset is formed by itself (namely $m_i=1$ for all $i\in\{1,2,\ldots,m\}$).  
\end{enumerate}
\end{remark}

\subsection{The main existence, uniqueness and convergence theorem}\label{subsec:ConvergenceTheorem}
The proof of the next theorem, which is the main convergence result, is long and is based on several lematta which we formulate and prove in the next sections.
\begin{Thm}\label{thm:Main} In the setting of Algorithm \bref{alg:MSGBAP}, suppose that either $\Omega_{m+1}$ is bounded or that $\Omega_{m+1}$ is unbounded and $\Omega_q$ is bounded for some $q\in \{1,2,\ldots,m\}$. Then \beqref{eq:MinProb} has at least one solution. If, in addition, $\Omega_s$ is strictly convex for some $s\in \{1,2,\ldots,m\}$ and also $\Omega_s\cap \Omega_{m+1}=\emptyset$, then \beqref{eq:MinProb} has a unique solution $x^*$, and $x^*$ has the property that it is also the unique fixed point of each of the functions $G_{\lambda,\gamma}:X\to X$, $\lambda\in (0,1]$, $\gamma\in (0,2)$, defined by 
\begin{equation}\label{eq:G_lambda_gamma}
G_{\lambda,\gamma}(x):=(1-\lambda)x+\lambda P_{m+1}((1-\gamma)x+\gamma\sum_{i=1}^m \beta_i P_i(x)),\quad \forall x\in X.
\end{equation}
Assume further that $X$ is finite dimensional, that there is some $\rho>0$ such that $\Omega_{m+1}\cup(\cup_{i=1}^m\cup_{j=1}^{m_i}\Omega_{i,j})\subseteq B[0,\rho]$, that $\Omega_i$ is strictly convex and satisfies $\Omega_i\cap \Omega_{m+1}=\emptyset$ for each $i\in\{1,2,\ldots,m\}$,    and that  $\lambda_{\infty}:=\lim_{k\to\infty}\lambda_k$ and $\gamma_{\infty}:=\lim_{k\to\infty}\gamma_k$ exist and satisfy $\lambda_{\infty}>0$ and $\gamma_{\infty}>0$. If $x^0\in \Omega_{m+1}$, then any sequence $(x^k)_{k=0}^{\infty}$ generated by Algorithm  \bref{alg:MSGBAP} is contained in $\Omega_{m+1}$ and converges to $x^*$. 
\end{Thm}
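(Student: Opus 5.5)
Three parts: existence, uniqueness with the fixed-point characterization, and convergence.

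\emph{Existence.} $\Phi$ is convex, continuous and Fr\'echet differentiable, with $\nabla\Phi(x)=\sum_{i=1}^m\beta_i(x-P_i(x))=x-\sum_{i=1}^m\beta_iP_i(x)$, because each $\frac12 d(\cdot,\Omega_i)^2$ has gradient $\Id-P_i$. If $\Omega_{m+1}$ is bounded, it is weakly compact, and $\Phi$ is weakly lower semicontinuous (convex and continuous), so a minimizer exists. If instead $\Omega_q$ is bounded, say $\Omega_q\subseteq B[0,R]$, then $d(x,\Omega_q)\ge\|x\|-R$. Hence $\Phi(x)\ge\frac12\beta_q(\|x\|-R)^2\to\infty$ as $\|x\|\to\infty$, so $\Phi$ is coercive on $\Omega_{m+1}$ and a minimizer exists again.

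\emph{Uniqueness.} Let $x\ne y$ be two minimizers. Convexity makes $\Phi$ constant on $[x,y]$, and since every summand is convex, each $d(\cdot,\Omega_i)^2$ is affine on $[x,y]$. For $i=s$ put $a=P_s(x)$ and $b=P_s(y)$. Both $x$ and $y$ lie outside $\Omega_s$ because $\Omega_s\cap\Omega_{m+1}=\emptyset$.
\begin{itemize}
\item If $a\ne b$, the midpoint $\frac{a+b}{2}$ lies in $\Int(\Omega_s)$ by strict convexity. Then $d(\frac{x+y}{2},\Omega_s)<\frac12\|x-a\|+\frac12\|y-b\|$, using a ball around $\frac{a+b}{2}$.
\item Combining this with the triangle inequality gives $d(\frac{x+y}{2},\Omega_s)^2<\frac12 d(x,\Omega_s)^2+\frac12 d(y,\Omega_s)^2$, contradicting affineness.
\item If $a=b$, affineness of $d^2$ forces $\|\frac{x+y}{2}-a\|^2=\frac12\|x-a\|^2+\frac12\|y-a\|^2$, which gives $x=y$.
\end{itemize}

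\emph{Fixed points.} By the Kolmogorov characterization, $x\in\Omega_{m+1}$ minimizes $\Phi$ over $\Omega_{m+1}$ iff $x=P_{m+1}(x-\gamma\nabla\Phi(x))$ for any $\gamma>0$. Here $x-\gamma\nabla\Phi(x)=(1-\gamma)x+\gamma\sum_i\beta_iP_i(x)$. Conversely, $x=G_{\lambda,\gamma}(x)$ with $\lambda>0$ gives $x=P_{m+1}(\cdots)$, so $x\in\Omega_{m+1}$ and $x$ is a minimizer. Therefore the fixed points of $G_{\lambda,\gamma}$ are exactly the minimizers, and $x^*$ is the unique fixed point. (The range $\gamma\in(0,2)$ is where $G_{\lambda,\gamma}$ is nonexpansive, since $\nabla\Phi$ is $1$-Lipschitz; the characterization itself holds for all $\gamma>0$.)

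\emph{Convergence.} Since $x^0\in\Omega_{m+1}$, $x^{k+1}$ is a convex combination of $x^k$ and a point of $\Omega_{m+1}$, so induction gives $(x^k)\subseteq\Omega_{m+1}\subseteq B[0,\rho]$. Two steps follow.
\begin{itemize}
\item \emph{Uniform approximation of the projections.} By the HLWB theory used in \cite{CensorMansourReem2024jour}, $\wh P_{i,k}\to P_i$ uniformly on the compact set $B[0,\rho]$; I would take this as an auxiliary lemma. Consequently $\|G_k-G_{\lambda_k,\gamma_k}\|\to0$ uniformly on $B[0,\rho]$, and $G_{\lambda_k,\gamma_k}\to G_{\lambda_\infty,\gamma_\infty}$ uniformly there as well.
\item \emph{Uniform contraction away from $x^*$, the main obstacle.} Fejér monotonicity fails, so I want: for each $\varepsilon>0$ there is $r_\varepsilon<\varepsilon$ with $\|G_{\lambda,\gamma}(x)-x^*\|\le r_\varepsilon$ for all $x\in\Omega_{m+1}$ with $\|x-x^*\|\le\varepsilon$ and all $(\lambda,\gamma)$ near $(\lambda_\infty,\gamma_\infty)$.
\end{itemize}

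For the second step, continuity and compactness reduce the claim to a strict inequality $\|G_{\lambda,\gamma}(x)-x^*\|<\|x-x^*\|$ for $x\ne x^*$. To get this, I would use the equality cases of nonexpansiveness of projections (the lemmas on $\|P(x)-P(y)\|=\|x-y\|$ and strict nonexpansiveness for strictly convex sets). If equality held throughout, then $P_i(x)-P_i(x^*)=x-x^*$ for every $i$. By strict convexity of $\Omega_i$ and $x,x^*\notin\Omega_i$, this forces $x=x^*$; this is also where compactness of the $\Omega_i$ enters.

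I would first establish the strict inequality only for $\|x-x^*\|\le\varepsilon$. Then, given $\varepsilon$, for large $k$ we have $\|x^{k+1}-x^*\|\le r_\varepsilon+o(1)<\varepsilon$ whenever $\|x^k-x^*\|\le\varepsilon$, so the ball is eventually invariant. Separately, a compactness argument shows the iterates must enter that ball, since away from it $\Phi$ strictly decreases by a uniform amount up to errors tending to zero. Together these give $x^k\to x^*$.
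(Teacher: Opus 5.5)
Your proposal is correct, but it departs from the paper's proof at two key points.

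\emph{Uniqueness.} The paper never argues on $\Phi$ directly. It shows that $G_{\lambda,\gamma}$ is Edelstein-contractive on $\Omega_{m+1}$, using the rectangle lemma and the Gubin--Polyak--Raik fact that $\|P_S(x)-P_S(y)\|<\|x-y\|$ for strictly convex $S$ when $x\neq y$ and one of them lies outside $S$. It then combines this with $\textnormal{Argmin}_{\Omega_{m+1}}(\Phi)=\textnormal{Fix}(G_{\lambda,\gamma})$. Your midpoint argument on $d(\cdot,\Omega_s)^2$ is a self-contained substitute. Two small corrections: the strict inequality in the case $a\neq b$ really needs $\frac{x+y}{2}\notin\Omega_s$, which again comes from disjointness; and the last step uses convexity of $t\mapsto t^2$, not the triangle inequality.

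\emph{Convergence.} The paper first proves that some subsequence tends to $x^*$. It argues by contradiction on $\liminf\|x^{k+1}-x^k\|$ at a cluster point $\bar x$, using $\|G^2(\bar x)-G(\bar x)\|<\|G(\bar x)-\bar x\|$ for $G:=G_{\lambda_\infty,\gamma_\infty}$. It then gets invariance of $B(x^*,\varepsilon)$ from a gap $r_\varepsilon$ for each individual $P_i$ on $\{(x,y)\in\Omega_{m+1}^2:\|x-y\|\geq\varepsilon/2\}$, which absorbs the error $\|\wh P_{i,k}-P_i\|<r_\varepsilon$. You instead put the gap on $G_{\lambda,\gamma}$ itself, as a maximum over $\Omega_{m+1}\cap B[x^*,\varepsilon]$ and a compact parameter box in $(0,1]^2$, and you use $\Phi$ as a Lyapunov function for the entry step.

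\emph{What each route buys.} Your entry step is the standard optimization-style argument. Also, Edelstein-contractivity of $G_{\lambda,\gamma}$ needs only one strictly convex $\Omega_s$ disjoint from $\Omega_{m+1}$, so your convergence argument in fact works under that weaker hypothesis. The paper's per-projection gap, as written, uses every $\Omega_i$. On the other hand, the paper's route is purely metric and never uses differentiability of $\Phi$ or a descent inequality.

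\emph{The step you still need to write out.} You assert the uniform decrease of $\Phi$ without proof. For $x\in\Omega_{m+1}$ and $y:=G_{1,\gamma}(x)$, use two facts:
\begin{itemize}
\item the descent lemma for the $1$-Lipschitz gradient $\nabla\Phi=\Id-\sum_i\beta_iP_i$;
\item the Kolmogorov inequality $\|y-x\|^2\leq\gamma\langle\nabla\Phi(x),x-y\rangle$.
\end{itemize}
Together they give $\Phi(y)\leq\Phi(x)-(\frac{1}{\gamma}-\frac12)\|y-x\|^2$, and convexity then gives $\Phi(G_{\lambda,\gamma}(x))\leq\Phi(x)-\lambda(\frac{1}{\gamma}-\frac12)\|G_{1,\gamma}(x)-x\|^2$. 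Since $\gamma_k\leq 1$ and $G_{1,\gamma}(x)\neq x$ for $x\neq x^*$, compactness of $\{x\in\Omega_{m+1}:\|x-x^*\|\geq\varepsilon\}$ and of the parameter box yields a uniform $\delta>0$. Lipschitz continuity of $\Phi$ on $B[0,\rho]$ transfers this decrease to $G_k$ up to $o(1)$.

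Finally, compactness does not enter in the equality-case step, as you say. It enters when you turn the pointwise strict inequality into a uniform $r_\varepsilon$.
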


\begin{remark}
\begin{enumerate}[(i)]
\item Non-uniqueness of the optimal solution $x^*$ of \beqref{eq:MinProb} can occur without imposing the strict convexity assumption on the sets in Theorem \bref{thm:Main}. Indeed, suppose that $X$ is the Euclidean plane, $\Omega_1:=[-10,-\eta_1]\times [0,1]$, $\Omega_2:=[\eta_2,10]\times [0,1]$, $\eta_1,\eta_2 \in (0, 10)$, $\Omega_3:=[-0.5\eta_1,0.5\eta_2]\times [0,1]$, $m=2$, $m_1=m_2=1$, and $(\beta_1,\beta_2)$ is an arbitrary positive weight vector. In this case there exists a solution $x^*=(x^*_1,x^*_2)$ to  \beqref{eq:MinProb} because of considerations mentioned in Lemma \bref{lem:Existence_Min} below. Since $\Phi$ is constant on intervals of the form $\{t\}\times [0,1]$ for each $t\in [-0.5\eta_1,0.5\eta_2]$ as a simple verification shows (because $d((x_1,x_2),\Omega_1)=x_1+\eta_1$ and $d((x_1,x_2),\Omega_2)=\eta_2-x_1$ for all $(x_1,x_2)\in \Omega_3$), it follows that for every $x_2\in [0,1]$ the pair $(x^*_1,x_2)$ solves \beqref{eq:MinProb}.

\item Non-uniqueness of the optimal solution of \beqref{eq:MinProb} can occur by replacing the strictly convex functional $\Phi$ defined in \beqref{eq:Phi} by the non-strictly convex functional $\Psi(x):=\frac{1}{2}\sum_{i=1}^m \beta_i d(x,\Omega_i)$, $x\in X$. Indeed, just take $X:=\R$, $m:=2$, $m_1:=m_2:=1$, $\Omega_1:=\{0\}$, $\Omega_2:=\{2\}$, $\Omega_3:=[0.5,1.5]$, $\beta_1:=\beta_2:=0.5$. In this case $\Psi(x)=0.5$ for all $x\in \Omega_3$. Similarly, if $X:=\R^2$, $m:=2$, $m_1:=m_2:=1$,  $\Omega_1:=B[(-1,0),1]$, $\Omega_2:=B[(3,0),1]$, $\Omega_3:=B[(1,0),0.5]$, $\beta_1:=\beta_2:=0.5$, then $\Psi(x)=0.5\leq \Psi(z)$ for all $x\in [0.5,1.5]\times\{0\}\subset \Omega_3$ and all $z\in \Omega_3$.

\item While in the last part of Theorem \bref{thm:Main} we require that $\Omega_i$ is strictly convex and satisfies $\Omega_i\cap \Omega_{m+1}=\emptyset$ for each $i\in\{1,2,\ldots,m\}$, we do not require that $\Omega_{i,j}$ is strictly convex and satisfies $\Omega_{i,j}\cap \Omega_{m+1}=\emptyset$ and $\Omega_{i,j}\cap \Omega_{i',j'}=\emptyset$ for each $i,i'\in\{1,2,\ldots,m\}$ and each $j\in\{1,2,\ldots, m_i\}$ and $j'\in \{1,2,\ldots,m_{i'}\}$. Hence, in particular, it may happen that $\Omega_{i_1,j_1}$ intersects both $\Omega_{i_2,j_2}$ and $\Omega_{m+1}$ for some $i_1,i_2\in \{1,2,\ldots,m\}$ and some $j_1\in\{1,2,\ldots,m_1\}$ and $j_2\in \{1,2,\ldots,m_2\}$: see Figure \bref{fig:MSGBAP-Illustration}. We note that a sufficient condition for $\Omega_i$ to be strictly convex is that  $\Omega_{i,j}$ is strictly convex for all $j\in\{1,2,\ldots,m_i\}$, but, as shown in Figure \bref{fig:MSGBAP-Illustration}, this condition is not necessary. 
\item In the special case where $m=1$ in Theorem \bref{thm:Main} we do not need to impose the boundedness of either $\Omega_1$ or $\Omega_2$ for the existence of a solution to \beqref{eq:MinProb} (which, as explained earlier, is equivalent to the existence of a BAP relative to $(\Omega_1,\Omega_2)$): see \cite[Theorem 5.1]{ReemCensor2026prep-BAP} for less restrictive assumptions.  
\end{enumerate}
\end{remark}

\subsection{ A short comparison to Cheney and Goldstein \cite{CheneyGoldstein1959jour}}\label{subsec:Comparison}
It is worth comparing briefly our results and algorithm to the ones obtained by Cheney and Goldstein in their celebrated 1959 paper \cite{CheneyGoldstein1959jour}. 

First, if in the BAT Problem \bref{prob:The-Multi-Sets-Generalized-1} we have $m=1=m_1$, then by what we explained earlier (below \beqref{eq:MinOmega_h}) the BAT problem,  namely Problem \bref{prob:The-Multi-Sets-Generalized-1}, reduces to BAT problem  considered in \cite{CheneyGoldstein1959jour}. Hence the BAT problem generalizes the BAP problem from two subsets to any finite number (greater than or equal 2) of subsets and from stand-alone satellite subsets to satellite subsets that each one of them is obtained from intersections of finitely many subsets.  

Second, if in Algorithm \bref{alg:MSGBAP} we have $m=1=m_1$, assume that $\lambda_k=\gamma_k=1$ for all $k\in\N\cup\{0\}$, and replace $\wh{P}_{1,k}$ by $P_1$ (see also Remark \bref{rem:Better_Than_whP} below for the justification for this latter replacement), then \beqref{eq:GBAP} leads to $x^{k+1}=P_2P_1(x^k)$ for each $k\in\N\cup\{0\}$. In other words, we restore the alternating projection method presented in \cite{CheneyGoldstein1959jour}.

Third, if we take $\lambda:=1=:\gamma$ in  Lemma \bref{lem:FixedPointMin} below, then we see that Lemma \bref{lem:FixedPointMin} generalizes \cite[Theorem 2]{CheneyGoldstein1959jour} which says that $x^*$ solves the minimization problem $\min_{x\in\Omega_{2}}d(x,\Omega_{1})$ if and only if $x^*=P_2(P_1(x^*))$. 

Fourth, the convergence part of Theorem \bref {thm:Main} above extends, but does not generalize, \cite[Theorem 4]{CheneyGoldstein1959jour} since the setting in \cite[Theorem 4]{CheneyGoldstein1959jour} is a possibly infinite-dimensional real Hilbert space, with possibly unbounded and non-strictly convex subsets, while in the convergence part of Theorem \bref {thm:Main} above we require that the space is a finite-dimensional, that all the subsets (pivot and satellite) are compact, and we also require the satellite subsets to be strictly convex.

\section{Preparatory results}\label{sec:Auxiliary}
In this section we formulate and prove (or refer to known proofs in the literature) several preparatory results which are needed for the proof of Theorem \bref{thm:Main}. Some of the results hold in a setting which is more general than the setting of this theorem. 

\begin{Lem}\label{lem:P(x)-P(y)=x-y}
Suppose that $(Y,\|\cdot\|)$ is a normed space, $\emptyset\neq S\subseteq Y$, and $F:S\to Y$ is $\nu$-firmly nonexpansive for some $\nu>0$. Given $(x,y)\in S^2$, one has $\|F(x)-F(y)\|=\|x-y\|$ if and only if $F(x)-F(y)=x-y$, namely, if and only if the points $x$, $y$, $F(x)$ and $F(y)$ are the four vertices of the possibly degenerate parallelogram $\{x+t(y-x)+s(F(x)-x)\,|\,(s,t)\in [0,1]^2\}$ (see Figures \bref{fig:NonDegenerateParallelogram} and \bref{fig:DegenerateParallelogram}). 
\end{Lem}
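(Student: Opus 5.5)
The argument is a direct two-way implication based only on the defining inequality of $\nu$-firm nonexpansiveness, which makes sense in any normed space. Throughout we fix $(x,y)\in S^2$.

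For the ``if'' direction, assume $F(x)-F(y)=x-y$. Taking norms of both sides immediately gives $\|F(x)-F(y)\|=\|x-y\|$.

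For the ``only if'' direction, assume $\|F(x)-F(y)\|=\|x-y\|$ and insert this into the defining inequality
\[
\|F(x)-F(y)\|^2\leq \|x-y\|^2-\nu\|(x-y)-(F(x)-F(y))\|^2 .
\]
Since the two squared norms coincide, the inequality reduces to
\[
\nu\|(x-y)-(F(x)-F(y))\|^2\leq 0 .
\]
Because $\nu>0$ and norms are nonnegative, this forces $\|(x-y)-(F(x)-F(y))\|=0$. Positive definiteness of the norm then gives $F(x)-F(y)=x-y$.

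The geometric reformulation is a matter of bookkeeping. Set $u:=y-x$ and $v:=F(x)-x$. The set $\{x+tu+sv\,|\,(s,t)\in[0,1]^2\}$ is the (possibly degenerate) parallelogram whose vertices are the values at the four corners of $[0,1]^2$:
\begin{itemize}
\item $(s,t)=(0,0)$ gives $x$;
\item $(s,t)=(0,1)$ gives $y$;
\item $(s,t)=(1,0)$ gives $F(x)$;
\item $(s,t)=(1,1)$ gives $x+u+v=y+F(x)-x$.
\end{itemize}
So the four points $x$, $y$, $F(x)$, $F(y)$ are its vertices exactly when $F(y)=y+F(x)-x$, which is the identity $F(x)-F(y)=x-y$. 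The parallelogram is degenerate when $u$ and $v$ are linearly dependent, for example when $F(x)=x$ or $x=y$. This case needs no separate treatment, because it is already covered by the same vector identity.

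No step here is a genuine obstacle. The only point requiring care is that the proof must not use an inner product, since $Y$ is only a normed space. This is fine: the argument uses only nonnegativity and definiteness of the norm, together with $\nu>0$.
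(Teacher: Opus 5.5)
Your proof is correct and is essentially the paper's argument: the converse direction substitutes the norm equality into the $\nu$-firm nonexpansiveness inequality to force $\nu\|(x-y)-(F(x)-F(y))\|^2\le 0$, and the forward direction is immediate. You also check that the corner $(s,t)=(1,1)$ of the parametrization is $y+F(x)-x$, so the vertex description is exactly the identity $F(x)-F(y)=x-y$; this fills in the ``namely'' step that the paper leaves unproved.
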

\begin{proof}
If $F(x)-F(y)=x-y$, then obviously $\|F(x)-F(y)\|=\|x-y\|$. Conversely, if $\|F(x)-F(y)\|=\|x-y\|$, then since $F$ is $\nu$-firmly nonexpansive, the inequality $\|F(x)-F(y)\|^2\leq \|x-y\|^2-\nu\|(x-y)-(F(x)-F(y))\|^2$ holds. This inequality and the nonnegativity of the norm imply that $0\leq \|(x-y)-(F(x)-F(y))\|^2\leq (1/\nu)(\|F(x)-F(y)\|^2-\|x-y\|^2)=0$. Hence $\|(x-y)-(F(x)-F(y))\|^2=0$ and  $(x-y)-(F(x)-F(y))=0$, as required. 
\end{proof}

\begin{figure}[htb]
\begin{minipage}{0.48\textwidth}
\begin{center}{\includegraphics[trim=103 370 50 100, clip=true, scale=0.6]{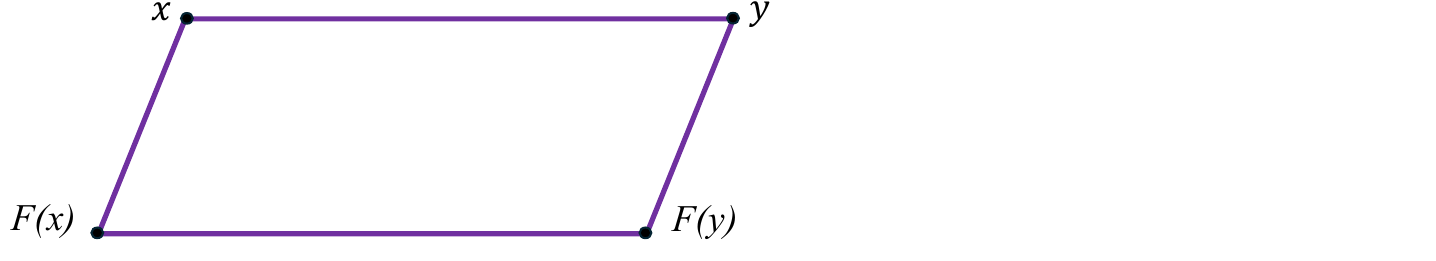}}
\end{center}
 \caption{The setting of Lemma \bref{lem:P(x)-P(y)=x-y}: a nondegenerate parallelogram.}
\label{fig:NonDegenerateParallelogram}
\end{minipage}
\hfill
\begin{minipage}{0.48\textwidth}
\begin{center}{\includegraphics[trim=50 400 50 70, clip=true, scale=0.6]{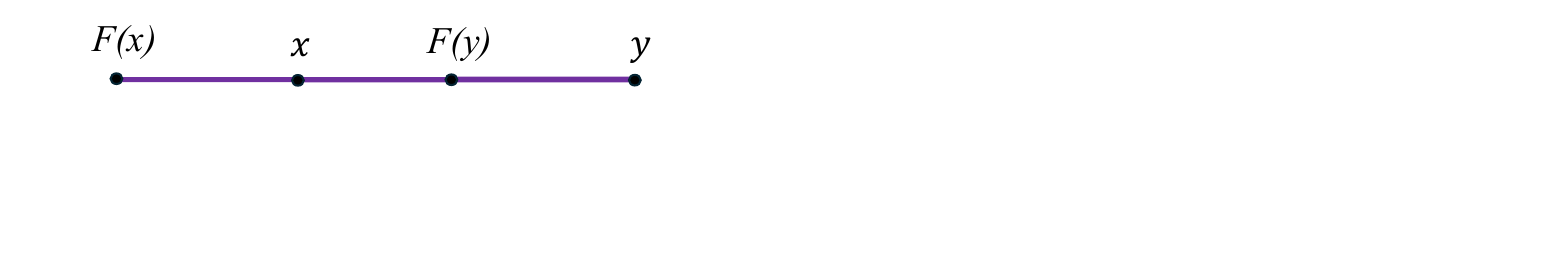}}
\end{center}
 \caption{The settings of Lemma \bref{lem:P(x)-P(y)=x-y}: a degenerate parallelogram.}
\label{fig:DegenerateParallelogram}
\end{minipage}
\end{figure}

\begin{remark}\label{rem:StrictlyNonexpansive}
\begin{enumerate}[(i)]
\item A nonexpansive operator $F:S\to Y$ for which the equality $\|F(x)-F(y)\|=\|x-y\|$ implies $F(x)-F(y)=x-y$ is called in \cite[p. 556]{KoltrachtLancaster1990jour} and \cite[Definition 2]{ElsnerKoltrachtNeumann1992jour} (in the case where $S=Y=\R^n$ for some $n\in\N$) strictly nonexpansive (unfortunately, the term ``strictly nonexpansive'' has other meanings in the literature: see Section \bref{sec:Preliminaries} above). We conclude from Lemma \bref{lem:P(x)-P(y)=x-y} that any  $\nu$-firmly nonexpansive operator ($\nu>0$) is strictly nonexpansive. For an interesting class of strictly nonexpansive operators which is related to Lemma \bref{lem:Rectangle} below see \cite[Definition 1, Proposition 1, Proposition 2, Proposition 8]{DePierroIusem1990jour}. 
\item Since the orthogonal projection $P_S$ onto a nonempty, closed and convex subset $S$ of a real Hilbert space $X$ is 1-firmly nonexpansive, we conclude from Lemma \bref{lem:P(x)-P(y)=x-y} that for all $(x,y)\in X^2$ one has $\|P_S(x)-P_S(y)\|=\|x-y\|$ if and only if $P_S(x)-P_S(y)=x-y$. In particular, if $\|P_S(x)-P_S(y)\|=\|x-y\|$ for some $(x,y)\in X^2$, then $P_S(x)-x=P_S(y)-y$ and so $\|P_S(x)-x\|=\|P_S(y)-y\|$. This is the necessary condition mentioned in \cite[Theorem 3]{CheneyGoldstein1959jour} for equality in the nonexpansiveness inequality $\|P_S(x)-P_S(y)\|\leq\|x-y\|$.  Lemma \bref{lem:P(x)-P(y)=x-y} presents a more general necessary and sufficient condition. 
\end{enumerate}

\end{remark}

Lemma \bref{lem:Rectangle} below is closely related to \cite[Proposition 8]{DePierroIusem1990jour}. Our arguments, however, are different from the ones presented in the proof of \cite[Proposition 8]{DePierroIusem1990jour} and have a more geometric flavour. 
\begin{Lem}\label{lem:Rectangle}
Let $S$ be a nonempty, closed, and convex subset of a real Hilbert space  $(Y,\langle \cdot,\cdot \rangle)$ with norm $\|\cdot\|$, $Y\neq\{0\}$. Let $x,y\in Y$ be arbitrary such that $x\neq y$ and either $x\notin S$ or $y\notin S$. If $\dim(Y)=1$, then $\|P_S(x)-P_S(y)\|<\|x-y\|$. If $\dim(Y)>1$ (possibly $\dim(Y)=\infty$) and $\|P_S(x)-P_S(y)\|=\|x-y\|$, then all the points $x$, $y$, $P_S(x)$ and $P_S(y)$ are distinct, they are located on the same affine plane, the convex hull spanned by them is a rectangle where each of these points is a corner of the rectangle, and $x-y$ is orthogonal to $P_S(y)-y$ (see Figure \bref{fig:rectangle}).
\end{Lem}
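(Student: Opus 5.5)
The plan is to reduce everything to the equality case of Lemma \bref{lem:P(x)-P(y)=x-y} and then combine it with the Kolmogorov property. Assume $\|P_S(x)-P_S(y)\|=\|x-y\|$. Since $P_S$ is $1$-firmly nonexpansive, Lemma \bref{lem:P(x)-P(y)=x-y} (see also Remark \bref{rem:StrictlyNonexpansive}(ii)) gives $P_S(x)-P_S(y)=x-y$. Equivalently, $u:=x-P_S(x)=y-P_S(y)$. Without loss of generality $x\notin S$, so $u\neq 0$, and then $y\notin S$ as well, since $\|y-P_S(y)\|=\|u\|>0$.

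The key step is to show that $u\perp (x-y)$. Apply the Kolmogorov property at $x$ with the point $P_S(y)\in S$: $\langle x-P_S(x),P_S(y)-P_S(x)\rangle\le 0$, that is, $\langle u,-(x-y)\rangle\le 0$. Apply it at $y$ with the point $P_S(x)\in S$: $\langle y-P_S(y),P_S(x)-P_S(y)\rangle\le 0$, that is, $\langle u,x-y\rangle\le 0$. Together these give $\langle u,x-y\rangle=0$. Since $u=y-P_S(y)$, this is exactly the asserted orthogonality of $x-y$ and $P_S(y)-y$.

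Next comes the geometry. The four points are $P_S(y)$, $P_S(y)+(x-y)$, $P_S(y)+u$ and $P_S(y)+(x-y)+u$, that is, $P_S(y)$, $P_S(x)$, $y$ and $x$ respectively. Here $v:=x-y$ and $u$ are nonzero and orthogonal, hence linearly independent. So the four points lie in the affine plane $P_S(y)+\mathrm{span}\{u,v\}$. They are pairwise distinct, because every pairwise difference is one of $\pm u$, $\pm v$ or $\pm u\pm v$, and none of these vanishes. Their convex hull is $\{P_S(y)+su+tv\,|\,(s,t)\in[0,1]^2\}$, a rectangle with the four points as its corners.

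For $\dim(Y)=1$, suppose equality held. The argument above would produce nonzero orthogonal vectors $u$ and $v$ in a one-dimensional space, which is impossible. Hence the inequality is strict. The step needing the most care is the sign bookkeeping in the two Kolmogorov inequalities and justifying $u\neq 0$ from the hypothesis that $x\notin S$ or $y\notin S$; everything else is immediate.
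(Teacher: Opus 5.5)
Your proof is correct, but it is organized differently from the paper's. The paper first gets $P_S(x)-P_S(y)=x-y$. It then proves that $x$, $y$, $P_S(y)$ are distinct and non-collinear by a two-case analysis on where $x$ sits on the line through $y$ and $P_S(y)$. One case uses the sunny-retraction property of $P_S$; the other uses convexity of $S$. This step settles $\dim(Y)=1$ and gives the non-degeneracy needed to talk about a parallelogram and its angles. Only then does the paper apply the Kolmogorov property twice, phrased as angles $\alpha_1,\alpha_2\geq\pi/2$ with $\alpha_1+\alpha_2=\pi$, to upgrade the parallelogram to a rectangle.

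You use the same two Kolmogorov inequalities immediately, in inner-product form, where they say $\langle u,x-y\rangle\geq 0$ and $\langle u,x-y\rangle\leq 0$. Orthogonality of the nonzero vectors $u$ and $x-y$ then gives, all at once, distinctness, planarity, the rectangle with the four points as corners, and the impossibility of equality in dimension one. The last step implicitly uses nonexpansiveness of $P_S$ to turn $\neq$ into $<$, which is standard.

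Your route is shorter and self-contained: it needs only Lemma \ref{lem:P(x)-P(y)=x-y} and the Kolmogorov property, with no sunny-retraction fact and no plane-geometry angle sum, and it treats all dimensions uniformly. The paper's route is more pictorial, but its collinearity case analysis is exactly what your orthogonality observation makes unnecessary.
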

\begin{proof}
Since either $x\notin S$ or $y\notin S$, we can assume that $y\notin S$ (the case where $x\notin S$ can be treated in a similar manner). 
First we show that if $\|P_S(x)-P_S(y)\|=\|x-y\|$, then the points $x$, $y$ and $P_S(y)$ are distinct and are not located on the same line. Indeed, under our assumptions, since $\|P_S(x)-P_S(y)\|=\|x-y\|$ and $P_S$ is 1-firmly nonexpansive \cite[Proposition 4.16, p. 74]{BauschkeCombettes2017book}, it follows from Lemma \bref{lem:P(x)-P(y)=x-y} that 
\begin{equation}\label{eq:P_S(x)-P_S(y)=x-y}
P_S(x)-P_S(y)=x-y.
\end{equation}
Since $y\notin S$, we have $y\neq P_S(y)\in S$. If $x=P_S(y)$, then $x\in S$ and hence $x=P_S(x)$.  Thus \beqref{eq:P_S(x)-P_S(y)=x-y} implies that $y-P_S(y)=0$, a contradiction. Hence $x\neq P_S(y)$, and since we already assume that $x\neq y$ and showed above that $y\neq P_S(y)$, we conclude that all the points $x$, $y$ and $P_S(y)$ are distinct. 

Next we show that the points $x$, $y$ and $P_S(y)$ cannot be located on the same line. Indeed, assume by way of contradiction that they are located on the same line $L$. Since $y\neq P_S(y)$ and both $y$ and $P_S(y)$ belong to $L$, we can represent $L$ as $L=\{y+t(P_S(y)-y)\,|\, t\in \R\}$. Since $x\in L$, we have $x=y+t_x(P_S(y)-y)$ for some $t_x\in\R$. Since $x\neq y$ and $x\neq P_S(y)$, we have $t_x\neq 0$ and $t_x\neq 1$. Hence there are two cases to consider:\\

{\bf \noindent  Case 1. Either $t_x<0$ or $t_x\in (0,1)$:} In this case the point $x$ is on the ray $\{P_S(y)+s(y-P_S(y))\,|\, s\in [0,\infty)\}$ emanating from $P_S(y)$ in the direction of $y$, and hence $P_S(x)=P_S(y)$ because the orthogonal projection is a sunny retraction, namely $P_S(z)=P_S(y)$ for all $z$ on the ray emanating from $P_S(y)$ in the direction of $y$: see \cite[p. 17]{GoebelReich1984book} (this fact can be proved directly, using basic properties of orthogonal projections such as the so-called Kolmogorov property mentioned below in \beqref{eq:KolmogorovProperty}; see \cite[Lemma 2.7]{Reich1973jour} for a much more general result). Thus \beqref{eq:P_S(x)-P_S(y)=x-y} implies that $x=y$, which is a contradiction. \\

{\bf \noindent Case 2. $t_x>1$:} In this case either $x\in S$ or $x\notin S$. If $x\in S$, then $x=P_S(x)$, and then \beqref{eq:P_S(x)-P_S(y)=x-y} implies that $P_S(y)=y$, a contradiction. Hence $x\notin S$ and, therefore, $x\neq P_S(x)\in S$. From  \beqref{eq:P_S(x)-P_S(y)=x-y} and the equality $x=y+t_x(P_S(y)-y)$ we have $P_S(x)=x+P_S(y)-y=y+t_x(P_S(y)-y)+P_S(y)-y=y+(t_x+1)(P_S(y)-y)$, namely $P_S(x)\in L$ with $t:=t_x+1$. 

Since $1<t_x<t_x+1$, it follows that $x=y+t_x(P_S(y)-y)$ is between $P_S(y)=y+1\cdot(P_S(y)-y)$ and $P_S(x)=y+(t_x+1)(P_S(y)-y)$, and so the convexity of $S$ implies that $x\in S$. But then $x=P_S(x)$, and hence \beqref{eq:P_S(x)-P_S(y)=x-y} implies that $y=P_S(y)\in S$, a contradiction to what we assumed above. \\

The previous analysis shows that if $\dim(Y)=1$, $x\neq y$ and either $x\notin S$ or $y\notin S$, then the equality $\|P_S(x)-P_S(y)\|=\|x-y\|$ cannot hold because obviously  when $\dim(Y)=1$, then $x$, $y$ and $P_S(y)$ are located on the same line. As a result, since $P_S$ is nonexpansive, we conclude that $\|P_S(x)-P_S(y)\|<\|x-y\|$ in this case.  

From now on we assume that $\dim(Y)\geq 2$ (possibly $\dim(Y)=\infty$), together with the assumptions from the beginning of the proof (that is, $x\neq y$, $\|P_S(x)-P_S(y)\|=\|x-y\|$ and $y\notin S$). In this case, by what we have shown above, we know that $x$, $y$ and $P_S(y)$ are distinct and are not located on the same line. Hence the affine space generated by $x$, $y$ and $P_S(y)$, namely $E:=P_S(y)+\textnormal{span}\{x-P_S(y),y-P_S(y)\}$, is two-dimensional. 

Since $x\neq y$, we have $\|P_S(x)-P_S(y)\|=\|x-y\|>0$, and, therefore, $P_S(x)\neq P_S(y)$. In addition, it must be true that $x\neq P_S(x)$: indeed, if $x=P_S(x)$, then \beqref{eq:P_S(x)-P_S(y)=x-y} implies that $y=P_S(y)\in S$, a contradiction. Since $y\notin S$, we also have $y\neq P_S(x)\in S$, and we conclude that all the four points $x$, $y$,  $P_S(x)$ and $P_S(y)$ are distinct. 

It follows from \beqref{eq:P_S(x)-P_S(y)=x-y} that $P_S(x)=P_S(y)+1\cdot (x-P_S(y))+(-1)\cdot(y-P_S(y))\in E$, and so all the four points $x$, $y$,  $P_S(x)$ and $P_S(y)$ are on the same affine plane $E$. Moreover, from \beqref{eq:P_S(x)-P_S(y)=x-y} and the fact that all the points are different from each other it follows that the nondegenerate interval $[P_S(x),P_S(y)]$ is parallel to the nondegenerate interval $[x,y]$. 

The equality \beqref{eq:P_S(x)-P_S(y)=x-y} implies that $P_S(x)-x=P_S(y)-y$, and hence the non-degenerate interval $[P_S(x),x]$ is parallel to the nondegenerate interval $[P_S(y),y]$. Thus $x$, $y$, $P_S(y)$, and $P_S(x)$ form a parallelogram: see Figure \bref{fig:parallelogram}. 

By the, so-called, Kolmogorov property of orthogonal projections, we have 
\begin{equation}\label{eq:KolmogorovProperty}
\langle x-P_S(x), z-P_S(x)\rangle \leq 0, \quad \forall z\in S.
\end{equation}
In particular, \beqref{eq:KolmogorovProperty} holds with $z:=P_S(y)$, namely the angle $\alpha_1:=\angle xP_S(x)P_S(y)$ between $x-P_S(x)$ and $P_S(y)-P_S(x)$ satisfies $\alpha_1\geq \pi/2$. Similarly (namely by replacing $x$ by $y$ in \beqref{eq:KolmogorovProperty} and letting there $z:=P_S(x)$), $\alpha_2:=\angle yP_S(y)P_S(x)$ satisfies $\alpha_2\geq \pi/2$.  Since $[P_S(x),x]$ is parallel to $[P_S(y),y]$, basic Euclidean plane geometry tells us that $\alpha_1+\alpha_2=\pi$. Thus, it must be that $\alpha_1=\pi/2$, since, otherwise, $\alpha_1>0.5\pi$ and hence $\pi=0.5\pi+0.5\pi<\alpha_1+\alpha_2=\pi$, a contradiction. Therefore, $\alpha_2=\pi-\alpha_1=\pi/2$, and we conclude that the parallelogram $xyP_S(y)P_S(x)$ is actually a rectangle and $x-y$ is orthogonal to $P_S(y)-y$: see Figure \bref{fig:rectangle}.  
\end{proof}

\begin{figure}[t]
\begin{minipage}{1\textwidth}
\begin{center}{\includegraphics[trim=80 485 20 100, clip=true, scale=0.59]{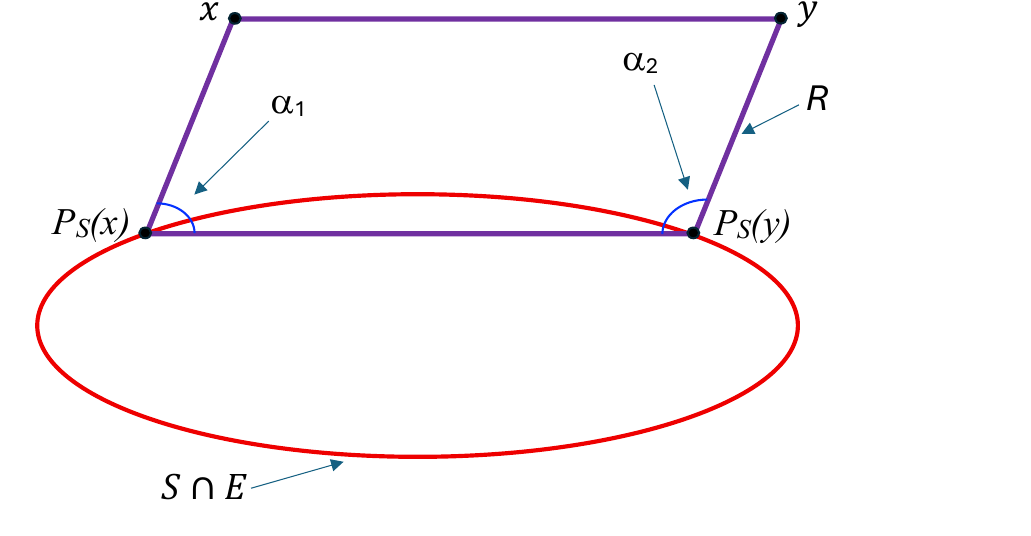}}
\end{center}
 \caption{The setting of Lemma \bref{lem:Rectangle}, where the points $x$, $y$, $P_S(y)$, $P_S(x)$ are the corners of  a parallelogram located on the affine plane $E$.}
\label{fig:parallelogram}
\end{minipage}
\hfill
\begin{minipage}{1\textwidth}
\begin{center}{\includegraphics[trim=80 490 20 70, clip=true, scale=0.59]{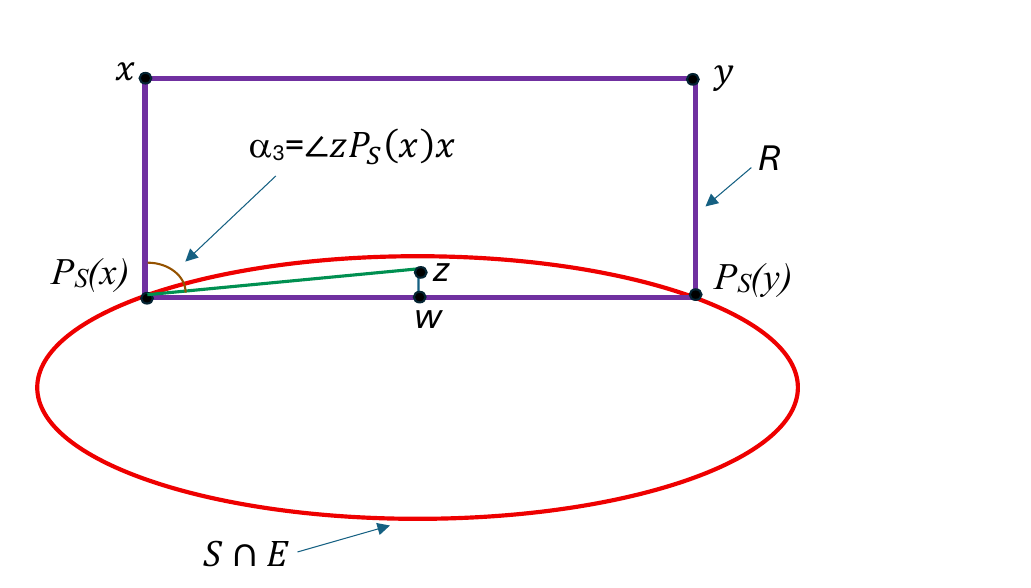}}
\end{center}
 \caption{The settings of Lemma \bref{lem:Rectangle} and Lemma \bref{lem:|P_S(x)-P_S(y)|<|x-y|}, where the points $x$, $y$, $P_S(y)$, $P_S(x)$ are the corners of  a rectangle $R$ located on the affine plane $E$, the point $z$ is an interior point of $R\cap S$, and $\alpha_3:=\angle zP_S(x)x$ satisfies $0<\alpha_3<0.5\pi$.}
\label{fig:rectangle}
\end{minipage}
\end{figure}

The next lemma is known \cite[Lemma 2]{GubinPolyakRaik1967jour}, but is not very well-known, and we present a new proof of it. We also remark that there are issues in the English translation of  \cite[Lemma 2]{GubinPolyakRaik1967jour} (and of \cite{GubinPolyakRaik1967jour} in general), and these issues may have contributed to the fact that \cite[Lemma 2]{GubinPolyakRaik1967jour} is not very known. The issues are that it should be ``strictly convex'' instead of ``strongly convex'' in the line located between (5) and (5') in the formulation of \cite[Lemma 1]{GubinPolyakRaik1967jour} and also in the sixth line of the proof of this lemma (this lemma is used in the proof of \cite[Lemma 2]{GubinPolyakRaik1967jour}), and it should be ``strictly convex'' instead of ``strongly convex'' also in the line located between (6) and (6') in the formulation \cite[Lemma 2]{GubinPolyakRaik1967jour}.

\begin{Lem}\label{lem:|P_S(x)-P_S(y)|<|x-y|}
Let $S$ be a nonempty, closed, and strictly convex subset of a real Hilbert space $(Y,\langle \cdot,\cdot \rangle)$ with norm $\|\cdot\|$, $Y\neq\{0\}$. Let $x,y\in Y$ be arbitrary such that $x\neq y$ and either $x\notin S$ or $y\notin S$. Then $\|P_S(x)-P_S(y)\|<\|x-y\|$. Stated differently, for all $x$ and $y$ in $Y$ which satisfy $\|P_S(x)-P_S(y)\|=\|x-y\|$ it must hold that either $x=y$ or both $x\in S$ and $y\in S$. 
\end{Lem}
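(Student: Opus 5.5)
We argue by contradiction, reducing to Lemma \bref{lem:Rectangle}. Nonexpansiveness of $P_S$ gives $\|P_S(x)-P_S(y)\|\leq\|x-y\|$, so it suffices to rule out equality. Assume $x\neq y$, that (without loss of generality) $y\notin S$, and that $\|P_S(x)-P_S(y)\|=\|x-y\|$. If $\dim(Y)=1$, Lemma \bref{lem:Rectangle} already gives strict inequality, so we may take $\dim(Y)\geq 2$. Lemma \bref{lem:Rectangle} then says that $x$, $y$, $P_S(y)$, $P_S(x)$ are four distinct points forming a nondegenerate rectangle $R$ in an affine plane $E$, with $x-P_S(x)=y-P_S(y)=:v\neq 0$ orthogonal to $u:=P_S(y)-P_S(x)\neq 0$.

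Next we use strict convexity. Since $P_S(x)\neq P_S(y)$ are both in $S$, the midpoint $c:=\tfrac12(P_S(x)+P_S(y))$ lies in $\Int(S)$. Hence some ball $B(c,\delta)\subseteq S$, and in particular the point $z:=c+\varepsilon v$ lies in $S$ for small $\varepsilon>0$. Geometrically, $z$ is a point of $R\cap S$ pushed slightly from the side $[P_S(x),P_S(y)]$ toward the side $[x,y]$. This matches the point $z$ drawn in Figure \bref{fig:rectangle}.

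Finally we contradict the Kolmogorov property \beqref{eq:KolmogorovProperty} at $x$ with this $z$. Compute
\[
\langle x-P_S(x),\, z-P_S(x)\rangle=\langle v,\tfrac12 u+\varepsilon v\rangle=\tfrac12\langle v,u\rangle+\varepsilon\|v\|^2=\varepsilon\|v\|^2>0,
\]
using $\langle v,u\rangle=0$ from Lemma \bref{lem:Rectangle}. This violates \beqref{eq:KolmogorovProperty}, so equality is impossible, and the reformulated statement follows. In the picture, the angle $\alpha_3=\angle zP_S(x)x$ is strictly less than $\pi/2$.

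The only delicate step is to make sure the perturbed point $z$ lies in $S$. This requires the interior point $c$ given by strict convexity, together with the nondegeneracy $P_S(x)\neq P_S(y)$, which follows from $x\neq y$ and the equality of norms. Everything else is supplied by Lemma \bref{lem:Rectangle}.
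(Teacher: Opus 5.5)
Your proposal is correct and follows essentially the same route as the paper. You invoke Lemma~\ref{lem:Rectangle} to get $x-P_S(x)\perp P_S(y)-P_S(x)$, use strict convexity to put the midpoint of $[P_S(x),P_S(y)]$ in $\Int(S)$, push it slightly in the direction $x-P_S(x)$, and contradict the Kolmogorov property at $x$. The only cosmetic difference is that you take a small $\varepsilon$ where the paper uses the explicit step $0.5r/\|x-P_S(x)\|$.
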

\begin{proof}
The case $\dim(Y)=1$ follows from Lemma \bref{lem:Rectangle}. Hence from now on $\dim(Y)\geq 2$ (possibly $\dim(Y)=\infty$). Assume for a contradiction that $\|P_S(x)-P_S(y)\|=\|x-y\|$. Then Lemma \bref{lem:Rectangle} implies that all the four points $x$, $y$,  $P_S(x)$ and $P_S(y)$ are distinct and each of them is a corner of a rectangle $R$ in which $x-y$ is orthogonal to $P_S(y)-y$ (and hence $P_S(y)-P_S(x)$ is orthogonal to $x-P_S(x)$). See Figure \bref{fig:rectangle}. 

Since $S$ is strictly convex and $P_S(x)\in S$ and $P_S(y)\in S$, any point in the open line segment $(P_S(x),P_S(y))$ is in the interior of $S$. In particular, $w:=0.5(P_S(x)+P_S(y))$ is in the interior of $S$ and hence there is some $r\in (0,\|x-P_S(x)\|)$ such that the open ball of center $w$ and radius $r$ is contained in $S$. In particular, the point $z:=w+(0.5r/\|x-P_S(x)\|)(x-P_S(x))$ is in $S$, and it is also strictly inside $R$ since $r<\|x-P_S(x)\|$ and $z=P_S(x)+0.5(P_S(y)-P_S(x))+(0.5r/\|x-P_S(x)\|)(x-P_S(x))$ and $R=\{P_S(x)+\eta_1(P_S(y)-P_S(x))+\eta_2(x-P_S(x))\,|(\eta_1,\eta_2)\in [0,1]^2\}$ (and the interior of $R$ is $\{P_S(x)+\eta_1(P_S(y)-P_S(x))+\eta_2(x-P_S(x))\,|(\eta_1,\eta_2)\in (0,1)^2\}$). This fact and the fact that $P_S(y)-P_S(x)$ is orthogonal to $x-P_S(x)$ imply that 
\begin{multline*}
\langle x-P_S(x),z-P_S(x)\rangle\\
=\frac{1}{2}\langle x-P_S(x),P_S(y)-P_S(x)\rangle+\frac{r}{2\|x-P_S(x)\|}\langle x-P_S(x),x-P_S(x)\rangle=\frac{1}{2}r\|x-P_S(x)\|.
\end{multline*}
Hence, if we let $\alpha_3:=\angle zP_S(x)x$, i.e., the angle between $z-P_S(x)$ and $x-P_S(x)$, then $\cos(\alpha_3)=\langle x-P_S(x),z-P_S(x)\rangle/(\|x-P_S(x)\|\|z-P_S(x)\|)=0.5r/\|z-P_S(x)\|>0$. Since 
\begin{equation*}
\|z-P_S(x)\|=\sqrt{\|z-w\|^2+\|w-P_S(x)\|^2}=\sqrt{(0.5r)^2+\|0.5(P_S(y)-P_S(x)\|^2}>\frac{r}{2}, 
\end{equation*}
we have $\cos(\alpha_3)=0.5r/\|z-P_S(x)\|<1$. Thus, $0<\alpha_3<\pi/2$ (see Figure \bref{fig:rectangle}), a contradiction to \beqref{eq:KolmogorovProperty} which holds since $z\in S$. The contradiction shows that the equality $\|P_S(x)-P_S(y)\|=\|x-y\|$ is impossible, and since $P_S$ is nonexpansive we conclude that $\|P_S(x)-P_S(y)\|<\|x-y\|$.
\end{proof}

\begin{Lem}\label{lem:Existence_Min}
In the setting of Algorithm \bref{alg:MSGBAP}, if, either $\Omega_{m+1}$ is bounded or it is unbounded and $\Omega_q$ is bounded for some $q\in \{1,2,\ldots,m\}$, then there exists a solution to \beqref{eq:MinProb} in $\Omega_{m+1}$.   
\end{Lem}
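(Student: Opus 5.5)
The plan is to apply the classical fact that a convex, continuous, coercive function on a nonempty, closed and convex subset of a real Hilbert space attains its minimum. We treat \(\Phi\) from \beqref{eq:Phi} on \(\Omega_{m+1}\) and use weak lower semicontinuity in place of compactness, since \(X\) may be infinite dimensional.

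\textbf{Step 1 (regularity of \(\Phi\)).} Each function \(x\mapsto d(x,\Omega_i)\) is 1-Lipschitz, so it is continuous. It is also convex, because \(\Omega_i=\cap_{j=1}^{m_i}\Omega_{i,j}\) is a nonempty, closed and convex set. The square of a nonnegative convex function is convex, since \(t\mapsto t^2\) is convex and nondecreasing on \([0,\infty)\). Hence \(\Phi\), a positive combination of such squares, is convex and continuous on \(X\). Consequently \(\Phi\) is weakly lower semicontinuous: its sublevel sets are closed and convex, hence weakly closed.

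\textbf{Step 2 (a bounded minimizing set).} If \(\Omega_{m+1}\) is bounded, put \(K:=\Omega_{m+1}\). Otherwise some \(\Omega_q\) is bounded, say \(\Omega_q\subseteq B[0,R]\). Then for \(x\in X\),
\[
d(x,\Omega_q)\geq \|x\|-R,
\]
so \(\Phi(x)\geq \frac{1}{2}\beta_q(\|x\|-R)^2\) whenever \(\|x\|\geq R\), and \(\Phi\) is coercive. Fix any \(x_0\in\Omega_{m+1}\) and set \(K:=\{x\in\Omega_{m+1}\,|\,\Phi(x)\leq\Phi(x_0)\}\). This set is nonempty, and it is bounded because \(\beta_q>0\). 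It is closed and convex since \(\Phi\) is continuous and convex. In both cases \(K\) is nonempty, bounded, closed and convex, and \(\inf_{\Omega_{m+1}}\Phi=\inf_K\Phi\).

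\textbf{Step 3 (attainment).} Take a minimizing sequence in \(K\). Since \(K\) is bounded, the sequence has a weakly convergent subsequence. Its weak limit lies in \(K\), because closed convex sets are weakly closed (Mazur). By weak lower semicontinuity, this limit attains \(\inf_K\Phi\), so it solves \beqref{eq:MinProb}. In finite dimensions one could instead invoke Weierstrass on the compact set \(K\).

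The main point needing care is Step 2 in the unbounded case. There the relevant bound is \(d(x,\Omega_q)\geq\|x\|-R\) together with \(\beta_q>0\); if \(\beta_q\) were zero, coercivity would fail. The rest is routine.
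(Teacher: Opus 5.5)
Your proof is correct and follows essentially the same route as the paper: continuity and convexity of $\Phi$ give weak lower semicontinuity, and then either boundedness of $\Omega_{m+1}$ or the coercivity coming from a bounded $\Omega_q$ with $\beta_q>0$ yields a minimizer via weak compactness. The only differences are minor: you carry out the attainment step by hand (a minimizing sequence in a bounded, closed, convex sublevel set) where the paper cites the corresponding corollaries in Brezis, and you use the direct bound $d(x,\Omega_q)\geq\|x\|-R$ instead of bounding $\|P_q(x)\|$ via nonexpansivity.
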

\begin{proof}
The real-valued functional $\Phi$ defined in \beqref{eq:Phi} is continuous and convex over $X$, as a sum of squares of nonnegative, convex and continuous functions. Hence (as follows from \cite[Corollary 3.9, p. 61]{Brezis2011book}), it is lower semicontinuous in the weak topology defined on $X$.

Now, if the nonempty, closed and convex subset $\Omega_{m+1}$ is bounded, then $\Omega_{m+1}$ is weakly compact since $X$ is reflexive \cite[Corollary 3.22, p. 71]{Brezis2011book} and hence, by the Weirestrass Extreme Value Theorem, $\Phi$ attains a minimum over $\Omega_{m+1}$, namely there exists a solution to \beqref{eq:MinProb}. If $\Omega_{m+1}$ is unbounded, then since we assume that  $\Omega_q$ is bounded for some $q\in \{1,\ldots,m\}$, we have that $\|P_q\|$ is bounded over $X$ (because if $x\in X$ is arbitrary, and $v\in \Omega_q$ is fixed independently of $x$, then the nonexpansivity of $P_q$ and the triangle inequality imply that $\|P_q(x)\|\leq \|P_q(x)-P_q(v)\|+\|P_q(v)\|\leq \text{diam}(\Omega_q)+\|P_q(v)\|<\infty$).

Hence, the triangle inequality implies that $\infty=\lim_{\|x\|\to\infty, x\in\Omega_{m+1}}(\|x\|-\|P_q(x)\|)\leq \lim_{\|x\|\to\infty, x\in\Omega_{m+1}}\|x-P_q(x)\|$. Therefore, $\infty=\lim_{\|x\|\to\infty, x\in \Omega_{m+1}}0.5\beta_q\|x-P_q(x)\|^2\leq \lim_{\|x\|\to\infty, x\in \Omega_{m+1}}\Phi(x)$ (also because we assume that $\beta_i>0$ for all $i\in\{1,2,\ldots,m\}$: see \beqref{eq:Phi}). Thus, $\Phi$ is coercive, convex and continuous (in particular, lower semicontinuous), and so we can use \cite[Corollary 3.23, p. 71]{Brezis2011book} to conclude that $\Phi$ has a minimizer in $\Omega_{m+1}$. Hence, \beqref{eq:MinProb} has a solution.
\end{proof}

\begin{Lem}\label{lem:FixedPointMin}
In the setting of Algorithm \bref{alg:MSGBAP}, denote by $\textnormal{Argmin}_{\Omega_{m+1}}(\Phi)$ the set of solutions to \beqref{eq:MinProb}, and denote by $\textnormal{Fix}_S(G_{\lambda,\gamma})$ the set of fixed points of the operator $G_{\lambda,\gamma}$ over a subset $S$ of $X$ for each $\lambda\neq 0$ and $\gamma>0$, where $G_{\lambda,\gamma}$ is defined in \beqref{eq:G_lambda_gamma}. Then $\textnormal{Argmin}_{\Omega_{m+1}}(\Phi)=\textnormal{Fix}_X(G_{\lambda,\gamma})=\textnormal{Fix}_{\Omega_{m+1}}(G_{\lambda,\gamma})$ for each $\lambda\neq 0$ and each $\gamma>0$, and therefore, $x$ solves  \beqref{eq:MinProb} if and only if $x$ is a fixed point of  $G_{\lambda,\gamma}$ for each $\lambda\neq 0$ and each $\gamma>0$, if and only if $x$ is a fixed point of $G_{\lambda,\gamma}$ for some $\lambda\neq 0$ and some $\gamma>0$ (and $x$ also belongs to $\Omega_{m+1}$).
\end{Lem}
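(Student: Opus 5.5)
The plan is to identify both sets with the solution set of the variational inequality for $\Phi$ over $\Omega_{m+1}$, using the gradient of $\Phi$ and the characterization of the projection $P_{m+1}$ via the Kolmogorov property \beqref{eq:KolmogorovProperty}.

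\textbf{Step 1: the gradient of $\Phi$.} For a nonempty, closed and convex set $S$, the function $\frac12 d(\cdot,S)^2$ is Fr\'echet differentiable with gradient $\Id-P_S$. This is standard; it can be derived from the firm nonexpansiveness of $P_S$ together with the Kolmogorov property. Hence $\Phi$ is convex and differentiable, and
\[
\nabla\Phi(x)=\sum_{i=1}^m\beta_i(x-P_i(x))=x-\sum_{i=1}^m\beta_iP_i(x),
\]
where the last equality uses $\sum_i\beta_i=1$. By the first-order optimality condition for a convex differentiable function over a closed convex set, $x\in\textnormal{Argmin}_{\Omega_{m+1}}(\Phi)$ if and only if $x\in\Omega_{m+1}$ and $\langle\nabla\Phi(x),z-x\rangle\ge0$ for all $z\in\Omega_{m+1}$.

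\textbf{Step 2: rewriting the optimality condition as a projection identity.} Fix $\gamma>0$. For $x\in\Omega_{m+1}$, set
\[
u:=(1-\gamma)x+\gamma\sum_{i}\beta_iP_i(x)=x-\gamma\nabla\Phi(x).
\]
By the Kolmogorov property, $x=P_{m+1}(u)$ if and only if $\langle u-x,z-x\rangle\le0$ for all $z\in\Omega_{m+1}$. This is the same as $-\gamma\langle\nabla\Phi(x),z-x\rangle\le0$, and dividing by $\gamma>0$ gives exactly the condition from Step 1. Hence, for points of $\Omega_{m+1}$, being a minimizer is equivalent to $x=P_{m+1}(x-\gamma\nabla\Phi(x))$.

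\textbf{Step 3: removing $\lambda$ and the a priori restriction to $\Omega_{m+1}$.} The equation $G_{\lambda,\gamma}(x)=x$ reads $\lambda\bigl(P_{m+1}(u)-x\bigr)=0$. Since $\lambda\neq0$, this is equivalent to $x=P_{m+1}(u)$, which forces $x\in\Omega_{m+1}$ because $P_{m+1}$ takes values in $\Omega_{m+1}$. So $\textnormal{Fix}_X(G_{\lambda,\gamma})=\textnormal{Fix}_{\Omega_{m+1}}(G_{\lambda,\gamma})$. Combining with Step 2 gives $\textnormal{Fix}_X(G_{\lambda,\gamma})=\textnormal{Argmin}_{\Omega_{m+1}}(\Phi)$. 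The right-hand side does not depend on $(\lambda,\gamma)$, so the "for each" and "for some" formulations in the statement follow immediately.

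\textbf{Main obstacle.} The only nontrivial ingredient is the differentiability claim $\nabla\tfrac12d(\cdot,S)^2=\Id-P_S$. I would either cite it (e.g., Bauschke--Combettes, Corollary 12.31) or prove it directly. Write $f(x)=\tfrac12\|x-P_S(x)\|^2$. The upper estimate
\[
f(y)-f(x)-\langle x-P_S(x),y-x\rangle\le\tfrac12\|y-x\|^2
\]
follows by comparing $y$ with the point $P_S(x)$. The lower estimate
\[
f(y)-f(x)-\langle x-P_S(x),y-x\rangle\ge0
\]
uses the Kolmogorov property at $x$ with $z:=P_S(y)$. These two bounds together give Fr\'echet differentiability with the stated gradient.
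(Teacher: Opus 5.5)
Your proof is correct and takes essentially the same route as the paper: eliminate $\lambda$ by arithmetic (which also forces every fixed point into $\Omega_{m+1}$), use $\nabla\Phi=\Id-\sum_i\beta_iP_i$, and identify the constrained minimizers with the fixed points of $P_{m+1}(\Id-\gamma\nabla\Phi)$. The only difference is that the paper cites Combettes--Bondon \cite[Proposition 3]{CombettesBondon1999jour} for that last identification and Cegielski for the gradient formula. You instead prove both directly, via the variational inequality and the two-sided Kolmogorov characterization of $P_{m+1}$ (its sufficiency direction, not stated in the paper, follows by expanding $\|u-z\|^2$), which makes your argument self-contained.
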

\begin{proof}
Part of this assertion essentially appears in \cite{CombettesBondon1999jour} (in a somewhat different form), and for the sake of completeness we provide its proof here. We first observe that if $x=G_{\lambda,\gamma}(x)$ for some $x\in X$, $\lambda\neq 0$ and $\gamma\in \R$, then the following equality holds: $x=(1-\lambda)x+\lambda P_{m+1}((1-\gamma)x+\gamma\sum_{i=1}^m \beta_i P_i(x))$. Hence, simple arithmetic implies that $x=P_{m+1}((1-\gamma)x+\gamma\sum_{i=1}^m \beta_i P_i(x))=G_{1,\gamma}(x)$ and $x\in \Omega_{m+1}$ because $P_{m+1}$ maps $X$ onto $\Omega_{m+1}$. Conversely, if  $x=G_{1,\gamma}(x)=P_{m+1}((1-\gamma)x+\gamma\sum_{i=1}^m \beta_i P_i(x))$ for some $x\in X$ and $\gamma\in\R$, then obviously $x\in \Omega_{m+1}$, and simple arithmetic implies that $x=(1-\lambda)x+\lambda x=(1-\lambda)x+\lambda P_{m+1}((1-\gamma)x+\gamma\sum_{i=1}^m \beta_i P_i(x))=G_{\lambda,\gamma}(x)$ for all $\lambda\in\R$ (even for $\lambda=0$). Therefore, $\textnormal{Fix}_X(G_{\lambda,\gamma})=\textnormal{Fix}_{\Omega_{m+1}}(G_{\lambda,\gamma})=\textnormal{Fix}_{X}(G_{1,\gamma})$ for all $\lambda\neq 0$ and all $\gamma\in\R$. 

Now denote $f_i(x):=\frac{1}{2}d(x,\Omega_i)^2$ for all $i\in\{1,\ldots,m\}$ and all $x\in X$. This function is continuously differentiable and its gradient satisfies $\nabla f_i=\Id-P_{i}$ (see \cite[Lemma 2.2.27, p. 79]{Cegielski2012book}). Thus, the function $f:X\to \R$ defined by $f(x):=\frac{1}{2}\sum_{i=1}^m \beta_i d(x,\Omega_i)^2$ for all $x\in X$, namely, $f=\sum_{i=1}^m \beta_i f_i$, is continuously differentiable. Since $\sum_{i=1}^m\beta_i=1$ we have $\nabla f=\sum_{i=1}^m \beta_i\nabla f_i=\Id-\sum_{i=1}^m\beta_iP_{i}$. 

Now we use \cite[Proposition 3]{CombettesBondon1999jour} which implies the equality $\textnormal{Argmin}_{\Omega_{m+1}}(\Phi)=\textnormal{Fix}_X(P_{m+1}(\Id-\gamma\nabla f))$ for each $\gamma>0$. However, the previous lines imply that $P_{m+1}(\Id-\gamma\nabla f)=P_{m+1}((1-\gamma)\Id+\gamma\sum_{i=1}^m \beta_i P_i)=G_{1,\gamma}$, and we conclude from this fact, as well as from the previous lines and the previous paragraph, the equality $\textnormal{Argmin}_{\Omega_{m+1}}(\Phi)=\textnormal{Fix}_X(G_{1,\gamma})=\textnormal{Fix}_X(G_{\lambda,\gamma})=\textnormal{Fix}_{\Omega_{m+1}}(G_{\lambda,\gamma})$ for each $\lambda\neq 0$ and each $\gamma>0$.
\end{proof}
 
\begin{Lem}\label{lem:|G(x)-G(y)|<|x-y|}
In the setting of Algorithm \bref{alg:MSGBAP}, suppose that $\lambda \in [0,1]$ and $\gamma\in [0,2]$, and let $G_{\lambda,\gamma}:X\to X$ be defined by \beqref{eq:G_lambda_gamma}. Then $G_{\lambda,\gamma}$ is nonexpansive over $X$ and maps $\Omega_{m+1}$ to itself. Suppose further that $\Omega_s$ is strictly convex for some $s\in\{1,2,\ldots,m\}$ and $\Omega_s\cap\Omega_{m+1}=\emptyset$, and assume also that $\lambda\in (0,1]$ and $\gamma\in (0,2)$. Then $G_{\lambda,\gamma}$ is Edelstein-contractive over $\Omega_{m+1}$, has at most one fixed point in $X$, and this fixed point - if it exists - is in $\Omega_{m+1}$. If, in addition, either $\Omega_{m+1}$ is bounded or $\Omega_{m+1}$ is unbounded and $\Omega_q$ is bounded for some $q\in \{1,2,\ldots,m\}$, then $G_{\lambda,\gamma}$ has a unique fixed point in $X$ and this fixed point is in $\Omega_{m+1}$.   
\end{Lem}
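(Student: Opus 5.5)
Write $T:=\sum_{i=1}^m\beta_iP_i$ and $H_\gamma:=(1-\gamma)\Id+\gamma T$, so that $G_{\lambda,\gamma}=(1-\lambda)\Id+\lambda P_{m+1}H_\gamma$. The proof has four steps: nonexpansiveness, invariance of $\Omega_{m+1}$, strict contractivity on $\Omega_{m+1}$, and existence/uniqueness of the fixed point.

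\emph{Step 1: nonexpansiveness and invariance.} Since $\Id-P_i$ is firmly nonexpansive, $\Id-T=\sum_i\beta_i(\Id-P_i)$ is firmly nonexpansive. Indeed, firm nonexpansiveness is equivalent to $\langle u-v,F(u)-F(v)\rangle\ge\|F(u)-F(v)\|^2$, and a convex combination preserves this by convexity of $\|\cdot\|^2$. Hence $T$ is firmly nonexpansive as well, and $2T-\Id$ is nonexpansive. For $\gamma\in[0,2]$ we can write $H_\gamma=(1-\gamma/2)\Id+(\gamma/2)(2T-\Id)$, a convex combination of nonexpansive maps, so $H_\gamma$ is nonexpansive. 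Then $P_{m+1}H_\gamma$ is nonexpansive, and so is the convex combination $G_{\lambda,\gamma}$ for $\lambda\in[0,1]$. Invariance is immediate: for $x\in\Omega_{m+1}$, the point $G_{\lambda,\gamma}(x)$ is a convex combination of $x$ and $P_{m+1}H_\gamma(x)$, both in the convex set $\Omega_{m+1}$.

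\emph{Step 2: Edelstein contractivity on $\Omega_{m+1}$.} Take $x\ne y$ in $\Omega_{m+1}$ with $\lambda\in(0,1]$ and $\gamma\in(0,2)$. Since $\Omega_s\cap\Omega_{m+1}=\emptyset$, we have $x\notin\Omega_s$, so Lemma \bref{lem:|P_S(x)-P_S(y)|<|x-y|} gives $\|P_s(x)-P_s(y)\|<\|x-y\|$. We want this to force $\|H_\gamma(x)-H_\gamma(y)\|<\|x-y\|$. The cleanest route is to write, for $\gamma\in(0,2)$,
\[
H_\gamma=(1-\gamma/2)\Id+(\gamma/2)(2T-\Id), \qquad 2T-\Id=\sum_i\beta_i(2P_i-\Id).
\]
Put $u=x-y$. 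If $\|H_\gamma x-H_\gamma y\|=\|u\|$, then strict convexity of the Hilbert norm, together with $1-\gamma/2>0$, forces $(2T-\Id)x-(2T-\Id)y=u$. Applying the same argument to the convex combination $\sum_i\beta_i(2P_i-\Id)$ with all $\beta_i>0$, each term must satisfy $(2P_i-\Id)x-(2P_i-\Id)y=u$, i.e. $P_ix-P_iy=u$. For $i=s$ this contradicts the strict inequality above. Hence $\|H_\gamma x-H_\gamma y\|<\|x-y\|$, and so $\|P_{m+1}H_\gamma x-P_{m+1}H_\gamma y\|<\|x-y\|$. Finally,
\[
\|G_{\lambda,\gamma}x-G_{\lambda,\gamma}y\|\le(1-\lambda)\|u\|+\lambda\|P_{m+1}H_\gamma x-P_{m+1}H_\gamma y\|<\|u\|,
\]
using $\lambda>0$. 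The main obstacle in the whole proof is this transfer of strictness through the averaging; it works precisely because $\gamma<2$ and every $\beta_i>0$.

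\emph{Step 3: at most one fixed point.} By Lemma \bref{lem:FixedPointMin}, every fixed point of $G_{\lambda,\gamma}$ in $X$ lies in $\Omega_{m+1}$. If $x\ne y$ were two fixed points, Step 2 would give $\|x-y\|=\|Gx-Gy\|<\|x-y\|$, a contradiction.

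\emph{Step 4: existence.} Under the boundedness hypothesis, Lemma \bref{lem:Existence_Min} yields a minimizer of \beqref{eq:MinProb}. By Lemma \bref{lem:FixedPointMin}, which applies since $\gamma>0$ and $\lambda\ne0$, this minimizer is a fixed point of $G_{\lambda,\gamma}$ lying in $\Omega_{m+1}$. Combined with Step 3, it is the unique fixed point.
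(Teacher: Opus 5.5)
Your proposal is correct. Steps 1, 3 and 4 follow the paper's structure. The only difference there is that the paper cites Cegielski for the firm nonexpansiveness of $T$ and the nonexpansiveness of its $\gamma$-relaxation, while you derive both directly via the reflector $2T-\Id$.

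The genuine difference is in the strictness step. The paper argues quantitatively. It uses the estimate $\|T_\gamma x-T_\gamma y\|^2\le\gamma(2-\gamma)\|Tx-Ty\|\,\|x-y\|+(1-\gamma)^2\|x-y\|^2$, which comes from firm nonexpansiveness and Cauchy--Schwarz. It then bounds $\|Tx-Ty\|\le\sum_i\beta_i\|P_ix-P_iy\|$ and inserts the strict inequality for $i=s$, which carries the positive weight $\gamma(2-\gamma)\beta_s$. This yields an explicit deficit,
\[
\|x-y\|^2-\|T_\gamma x-T_\gamma y\|^2\ge\gamma(2-\gamma)\beta_s\|x-y\|\bigl(\|x-y\|-\|P_sx-P_sy\|\bigr).
\]

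You argue qualitatively instead. You write $H_\gamma$ as a proper average of $\Id$ and $2T-\Id$, and use the equality case of the triangle inequality in the strictly convex Hilbert norm. This shows that equality $\|H_\gamma x-H_\gamma y\|=\|x-y\|$ would force $P_ix-P_iy=x-y$ for every $i$. That is a stronger rigidity statement, in the spirit of Lemma~\ref{lem:P(x)-P(y)=x-y}. It also needs only strict convexity of the norm together with nonexpansiveness of the pieces, but it gives no explicit quantitative gap.

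When you write this up, state two ingredients of the equality argument explicitly. First, it uses $\gamma/2>0$ (to get $\|w\|=\|u\|$) as well as $1-\gamma/2>0$. Second, it uses the bounds $\|(2P_i-\Id)x-(2P_i-\Id)y\|\le\|x-y\|$ for each $i$. Both are available under your hypotheses.
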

\begin{proof}
Let $T:=\sum_{i=1}^m\beta_i P_i$ be the weighted sum operator of the operators $P_i$, $i\in \{1,2,\ldots,m\}$, and let  $T_{\gamma}:=(1-\gamma)\Id+\gamma T$ be its $\gamma$-relaxation whenever $\gamma\in [0,2]$. The triangle inequality, the definition of $G_{\gamma,\lambda}$ (see \beqref{eq:G_lambda_gamma}), the nonexpansivity of  $P_{m+1}$, the fact that $(\beta_i)_{i=1}^m$ is a positive weight vector and the fact that $\lambda\in [0,1]$, imply, for each $x\in X$ and $y\in X$, that 
\begin{multline}\label{eq:G_nonexpansive}
\|G_{\lambda,\gamma}(x)-G_{\lambda,\gamma}(y)\|=\Big\|\Big((1-\lambda)x+\lambda P_{m+1}((1-\gamma)x+\gamma\sum_{i=1}^m \beta_i P_i(x))\Big)\\
-\Big((1-\lambda)y+\lambda P_{m+1}((1-\gamma)y+\gamma\sum_{i=1}^m \beta_i P_i(y))\Big)\Big\|\\
\leq (1-\lambda)\|x-y\|+\lambda \|P_{m+1}((1-\gamma)x+\gamma\sum_{i=1}^m \beta_i P_i(x))-P_{m+1}((1-\gamma)y+\gamma\sum_{i=1}^m \beta_i P_i(y))\|\\
\leq (1-\lambda)\|x-y\|+\lambda\|((1-\gamma)x+\gamma\sum_{i=1}^m \beta_i P_i(x))-((1-\gamma)y+\gamma\sum_{i=1}^m \beta_i P_i(y))\|\\
=(1-\lambda)\|x-y\|+\lambda\|T_{\gamma}(x)-T_{\gamma}(y)\|.
\end{multline}
Since the orthogonal projection onto a nonempty, closed and convex subset is firmly nonexpansive \cite[Theorem 2.2.21(iii) on p. 76 and Theorem 2.2.10(v) on p. 70]{Cegielski2012book}, and since a positively weighted sum (namely, with a positive weight vector) of firmly nonexpansive operators is also firmly nonexpansive \cite[Corollary 2.2.20, p. 75]{Cegielski2012book}, we conclude that the operator $T=\sum_{i=1}^m\beta_i P_i$ is firmly nonexpansive. Hence \cite[Theorem 2.2.20(ii)]{Cegielski2012book} implies that its relaxation $T_{\gamma}$ is nonexpansive for all $\gamma\in [0,2]$. Thus from \beqref{eq:G_nonexpansive} and the fact that $\lambda\in [0,1]$ we conclude that $G_{\lambda,\gamma}$ is nonexpansive.

Now let $x\in\Omega_{m+1}$ be arbitrary. The definition of $G_{\lambda,\gamma}$ (see \beqref{eq:G_lambda_gamma}), the assumption that $\lambda\in [0,1]$, the fact that $P_{m+1}(z)\in \Omega_{m+1}$ for every $z\in X$, and the convexity of $\Omega_{m+1}$, all imply that $G_{\lambda,\gamma}(x)$ is a convex combination of $x$ and $P_{m+1}(z)\in \Omega_{m+1}$ for $z:=(1-\gamma)x+\gamma\sum_{i=1}^m \beta_i P_i(x)$. Hence $G_{\lambda,\gamma}(x)\in \Omega_{m+1}$ for each $x\in\Omega_{m+1}$ and therefore $G_{\lambda,\gamma}$ maps $\Omega_{m+1}$ to itself. 

Now we want to show that under certain assumptions $G_{\lambda,\gamma}$ is Edelstein-contractive over $\Omega_{m+1}$. Before recalling these assumptions, we observe that from the calculation presented in \cite[Top of p. 71]{Cegielski2012book} (where we take $\gamma$ instead of $\lambda$ there) we have for all $\gamma\in [0,2]$ and all $(x,y)\in X^2$,
\begin{equation}\label{T_gamma_Inequality}
\|T_{\gamma}(x)-T_{\gamma}(y)\|^2\leq \gamma(2-\gamma)\|T(x)-T(y)\|\|x-y\|+(1-\gamma)^2\|x-y\|^2 .
\end{equation}
(In fact, the calculation done in \cite[Top of p. 71]{Cegielski2012book} and also \beqref{T_gamma_Inequality} hold for any firmly nonexpansive operator $T$ and not just for $T=\sum_{i=1}^m\beta_i P_i$.) Therefore \beqref{T_gamma_Inequality}, the definition of $T$, the fact that $(\beta_i)_{i=1}^m$ is a weight vector, and the triangle inequality, all imply that 
\begin{multline}\label{eq:T_gamma^2}
\|T_{\gamma}(x)-T_{\gamma}(y)\|^2\leq
\gamma(2-\gamma)\left\|\sum_{i=1}^m\beta_i(P_i(x)-P_i(y))\right\|\cdot\|x-y\|+(1-\gamma)^2\|x-y\|^2\\
\leq \gamma(2-\gamma)\|x-y\|\sum_{i=1}^m\beta_i\|P_i(x)-P_i(y)\|+(1-\gamma)^2\|x-y\|^2.
\end{multline}

Now assume that $\Omega_s$ is strictly convex for some $s\in\{1,2,\ldots,m\}$, that $\Omega_s\cap\Omega_{m+1}=\emptyset$, that $\lambda\in (0,1]$ and that $\gamma\in (0,2)$. Take any pair of points $(x,y)\in \Omega_{m+1}^2$ such that $x\neq y$. Then both $x\notin \Omega_s$ and $y\notin \Omega_s$, and hence Lemma \bref{lem:|P_S(x)-P_S(y)|<|x-y|} (applied to $S:=\Omega_s$), the nonexpansivity of the orthogonal projections $P_i$ for every  $i\in\{1,2,\ldots,m\}$, the fact that $(\beta_i)_{i=1}^m$ is a positive weight vector and hence $\beta_s>0$, and the assumptions that $\Omega_s$ is strictly convex, $x\neq y$, $\gamma\in (0,2)$ (which implies that $\gamma(2-\gamma)>0$) and \beqref{eq:T_gamma^2}, all imply that 
\begin{equation}\label{eq:EdelsteinContractiveT_gamma}
\begin{split}
\|T_{\gamma}(x)-T_{\gamma}(y)\|^2
&\leq\gamma(2-\gamma)\|x-y\|\sum_{i=1}^m\beta_i\|P_i(x)-P_i(y)\|+(1-\gamma)^2\|x-y\|^2\\
&=\gamma(2-\gamma)\|x-y\|\sum_{i\neq s}\beta_i\|P_i(x)-P_i(y)\|\\
&+\gamma(2-\gamma)\beta_s\|x-y\|\|P_s(x)-P_s(y)\|+(1-\gamma)^2\|x-y\|^2\\
&<\gamma(2-\gamma)\|x-y\|\sum_{i\neq s}\beta_i\|x-y\|+\gamma(2-\gamma)\beta_s\|x-y\|^2\\
&+(1-\gamma)^2\|x-y\|^2\\
&=\gamma(2-\gamma)\|x-y\|^2\sum_{i=1}^m\beta_i+(1-\gamma)^2\|x-y\|^2\\
&=\|x-y\|^2.
\end{split}
\end{equation}
Thus $\|T_{\gamma}(x)-T_{\gamma}(y)\|<\|x-y\|$, and when we combine this inequality with  \beqref{eq:G_nonexpansive} and the assumption that $\lambda>0$, we see that $G_{\lambda,\gamma}$ is Edelstein contractive under the assumptions imposed before \beqref{eq:EdelsteinContractiveT_gamma}. 

As for the fixed points of $G_{\lambda,\gamma}$ over $X$, Lemma \bref{lem:FixedPointMin} tells us that any fixed point of $G_{\lambda,\gamma}$, if it exists, belongs to $\Omega_{m+1}$. Now, if $x$ and $y$ are two distinct fixed points of $G_{\lambda,\gamma}$ in $X$, then they belong to $\Omega_{m+1}$ and $\|x-y\|=\|G_{\lambda,\gamma}(x)-G_{\lambda,\gamma}(y)\|<\|x-y\|$ since  $G_{\lambda,\gamma}$ is Edelstein-contractive over $\Omega_{m+1}$ as we showed above. This contradiction shows that  $G_{\lambda,\gamma}$ has at most one fixed point in $\Omega_{m+1}$ and hence (Lemma \bref{lem:FixedPointMin}) at most one fixed point in $X$. 

It remains to consider the issue of existence of a fixed point of $G_{\lambda,\gamma}$ under the further assumption that  either $\Omega_{m+1}$ is bounded or $\Omega_{m+1}$ is unbounded but $\Omega_q$ is bounded for some $q\in \{1,2,\ldots,m\}$. This is an immediate consequence of Lemma \bref{lem:Existence_Min} and Lemma \bref{lem:FixedPointMin}. 
\end{proof}

The next lemma is recent and is essential for the convergence analysis.
\begin{Lem}\label{lem:Lemma_27_CensorMansourReem2024jour} (a particular case of \cite[Lemma 27]{CensorMansourReem2024jour}, with a different notation) Under the conditions of Algorithm \bref{alg:MSGBAP}, suppose that $X$ is finite-dimensional. Then for each $i\in\{1,2,\ldots,m\}$ and each $\rho>0$ the sequence $(\wh{P}_{i,k})_{k=0}^{\infty}$ converges uniformly, over $B[0,\rho]$, to the orthogonal projection $P_{i}$ onto $\Omega_i$.
\end{Lem}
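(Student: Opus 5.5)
Fix $i\in\{1,2,\ldots,m\}$ and $\rho>0$, and write $A_{i,t}(x):=\tau_{i,t}x+(1-\tau_{i,t})T_i(x)$, where $T_i:=\sum_{j=1}^{m_i}w_{i,j}P_{i,j}$. Then $\wh{P}_{i,k}=A_{i,k}\circ\cdots\circ A_{i,0}$. The plan is to view $\wh{P}_{i,k}(x)$ as the $(k+1)$-th iterate of a Halpern--Lions--Wittmann--Bauschke type scheme anchored at the point $x$ itself. For fixed $x$, set $y^0:=x$ and $y^{t+1}:=\tau_{i,t}x+(1-\tau_{i,t})T_i(y^t)$; the structure of \beqref{eq:ApproxProjection} is meant to encode exactly this recursion, so that $y^{k+1}=\wh{P}_{i,k}(x)$. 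The classical HLWB theorem (Wittmann, Bauschke) then applies. Here $T_i$ is firmly nonexpansive as a convex combination of projections, and $\textnormal{Fix}(T_i)=\cap_j\Omega_{i,j}=\Omega_i\neq\emptyset$ because the weights are positive and the fixed point set of an averaged combination of projections with common fixed points is the intersection. With $(\tau_{i,t})$ a steering sequence, the theorem gives $y^t\to P_{\textnormal{Fix}(T_i)}(x)=P_i(x)$. This yields pointwise convergence $\wh{P}_{i,k}(x)\to P_i(x)$ for each $x\in X$.

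To upgrade to uniform convergence over $B[0,\rho]$, I would use equicontinuity together with compactness, since $X$ is finite dimensional and so $B[0,\rho]$ is compact. The maps $x\mapsto y^t(x)$ are Lipschitz with a constant independent of $t$. Indeed, with $d_t:=\|y^t(x)-y^t(x')\|$, the nonexpansivity of $T_i$ gives $d_{t+1}\le \tau_{i,t}\|x-x'\|+(1-\tau_{i,t})d_t$ and $d_0=\|x-x'\|$, so by induction $d_t\le\|x-x'\|$ for all $t$. Thus $\{\wh{P}_{i,k}\}_k\cup\{P_i\}$ is a family of $1$-Lipschitz maps converging pointwise on the compact set $B[0,\rho]$. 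The standard $\varepsilon/3$ argument then gives uniform convergence: cover $B[0,\rho]$ by a finite $\varepsilon/3$-net, choose $k$ large enough that convergence holds within $\varepsilon/3$ at each net point, and use the Lipschitz bounds to pass to arbitrary $x$.

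The main obstacle is the first step: carefully checking that the product in \beqref{eq:ApproxProjection} is indeed the anchored HLWB recursion, with the anchor $x$ fixed at every factor rather than the current iterate. If the notation is instead read literally as a composition of the maps $A_{i,t}$, in which each factor anchors at its own input, I would revert to the formulation of \cite[Lemma 27]{CensorMansourReem2024jour}, which defines the operator precisely in the anchored sense. I would also verify that the steering conditions $\sum_t\tau_{i,t}=\infty$ and $\sum_t|\tau_{i,t+1}-\tau_{i,t}|<\infty$ match Bauschke's hypotheses for a single firmly nonexpansive operator $T_i$, which they do.
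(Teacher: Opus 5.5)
Your proof is correct. The paper gives no argument of its own here: it imports the statement as a special case of \cite[Lemma 27]{CensorMansourReem2024jour}, so your write-up is a self-contained substitute for a citation.

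The two ingredients you use are the natural ones. Wittmann's theorem, applied to the single nonexpansive operator $T_i=\sum_{j=1}^{m_i}w_{i,j}P_{i,j}$ (whose fixed point set is $\Omega_i$), gives $\wh{P}_{i,k}(x)\to P_i(x)$ for every $x$, in any Hilbert space. Your estimate $d_{t+1}\le\tau_{i,t}\|x-x'\|+(1-\tau_{i,t})d_t$ shows that every $\wh{P}_{i,k}$ is nonexpansive, so the finite-net argument on the compact ball $B[0,\rho]$ upgrades pointwise convergence to uniform convergence. Compared with the bare citation, this makes it transparent that finite dimensionality is used only through the compactness of $B[0,\rho]$, and that the conclusion holds on every compact subset of $X$.

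One correction to the write-up. Drop the identification $\wh{P}_{i,k}=A_{i,k}\circ\cdots\circ A_{i,0}$ with $A_{i,t}(x)=\tau_{i,t}x+(1-\tau_{i,t})T_i(x)$. Instead, state the anchored recursion $y^0:=x$, $y^{t+1}:=\tau_{i,t}x+(1-\tau_{i,t})T_i(y^t)$, $\wh{P}_{i,k}(x):=y^{k+1}$, as the definition rather than as a fallback. The anchored reading is forced, because under the literal composition reading the lemma is false once $m_i\geq 2$.

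For instance, in $\R^2$ let $\Omega_{i,1}=\{(a,b)\,|\,b\le 0\}$, $\Omega_{i,2}=\{(a,b)\,|\,a+b\le 0\}$, $w_{i,1}=w_{i,2}=1/2$ and $x=(2,1)$, so that $P_i(x)=(0.5,-0.5)$. Run $z^{t+1}=\tau_{i,t}z^t+(1-\tau_{i,t})T_i(z^t)$ from $z^0:=x$, writing $z^t=(a_t,b_t)$. A direct computation shows that each step changes $a_t-b_t$ by the nonnegative amount $\frac{1}{2}(1-\tau_{i,t})\max\{b_t,0\}$. Hence $a_t-b_t\geq 1+\frac{1}{2}(1-\tau_{i,0})>1$ for all $t\geq 1$, and these iterates stay at distance at least $(1-\tau_{i,0})/(2\sqrt{2})$ from $P_i(x)$.
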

We finish this section with the following well-known lemma whose proof is just an exercise: see \cite[Ex. 8, p. 134]{Munkres2014book}. 
\begin{Lem}\label{lem:g_k(x^k)}
Suppose that $(X_1,\mathcal{T}_1)$ is a topological space and $(X_2,d_2)$ is a metric space. Assume also that $(g_k)_{k=0}^{\infty}$ is a sequence of continuous functions from $X_1$ to $X_2$ which converges uniformly to a function $g:X_1\to X_2$. Suppose also that $(z^k)_{k=0}^{\infty}$ is a sequence in $X_1$ which converges to $z\in X_1$. Then $\lim_{k\to\infty}g_k(z^k)=g(z)$.
\end{Lem}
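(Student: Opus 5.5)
The plan is to split the distance $d_2(g_k(z^k),g(z))$ by the triangle inequality into a term controlled by uniform convergence and a term controlled by continuity of the limit function $g$. The only ingredient not immediately available is that $g$ itself is continuous. I would establish this first, since the hypotheses only give continuity of the $g_k$.

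\textbf{Step 1: $g$ is continuous.} I would use the standard $\varepsilon/3$ argument, phrased with neighbourhoods because $X_1$ is only a topological space. Fix $x_0\in X_1$ and $\varepsilon>0$.
\begin{itemize}
\item By uniform convergence, choose $N$ such that $\sup_{x\in X_1}d_2(g_N(x),g(x))<\varepsilon/3$.
\item Since $g_N$ is continuous at $x_0$, there is an open set $U\ni x_0$ with $d_2(g_N(x),g_N(x_0))<\varepsilon/3$ for all $x\in U$.
\item For $x\in U$, the triangle inequality then gives
\begin{equation*}
d_2(g(x),g(x_0))\leq d_2(g(x),g_N(x))+d_2(g_N(x),g_N(x_0))+d_2(g_N(x_0),g(x_0))<\varepsilon .
\end{equation*}
\end{itemize}
So the preimage under $g$ of every open ball about $g(x_0)$ contains a neighbourhood of $x_0$. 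Hence $g$ is continuous at every point, and therefore continuous.

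\textbf{Step 2: the limit.} Continuity implies sequential continuity in any topological space: if $V$ is a neighbourhood of $g(z)$, then $g^{-1}(V)$ is a neighbourhood of $z$, which contains $z^k$ for all large $k$. So $z^k\to z$ gives $d_2(g(z^k),g(z))\to 0$. Now write
\begin{equation*}
d_2(g_k(z^k),g(z))\leq \sup_{x\in X_1}d_2(g_k(x),g(x))+d_2(g(z^k),g(z)).
\end{equation*}
The first term tends to $0$ by the uniform convergence of $(g_k)_{k=0}^{\infty}$ to $g$. The second term tends to $0$ by what was just said. Hence $\lim_{k\to\infty}g_k(z^k)=g(z)$.

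\textbf{Main obstacle.} The only point needing care is Step 1, namely carrying out the continuity argument without any metric on $X_1$. The neighbourhood formulation above handles this. Everything else is a direct triangle-inequality estimate.
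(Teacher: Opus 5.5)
Your argument is correct: Step 1 is the uniform limit theorem, namely that a uniform limit of continuous maps from a topological space into a metric space is continuous, and Step 2 is the natural triangle-inequality split combined with sequential continuity of $g$. The paper gives no proof of its own and only cites Munkres's Exercise 8 on p.~134, whose intended solution follows exactly this route via Munkres's uniform limit theorem, so your proposal matches the argument the paper points to.
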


\section{Existence, uniqueness and convergence analysis for the BAT algorithm}\label{sec:Convergence-analysis}

In this section we present the analysis needed for proving our main existence, uniqueness and convergence result, namely, Theorem \bref{thm:Main} formulated above. The theorem itself is proved at the end of the section. 

\begin{Lem}\label{lem:x^k_is_bounded}
In the setting of Algorithm \bref{alg:MSGBAP}, for each $k\in\N\cup\{0\}$ the function $G_k$ maps $\Omega_{m+1}$ into itself. In particular, if $x^0\in \Omega_{m+1}\subseteq B[0,\rho]$ for some $\rho>0$, then any sequence $(x^k)_{k\in\N\cup\{0\}}$ generated by Algorithm \bref{alg:MSGBAP} is contained in $\Omega_{m+1}\subseteq B[0,\rho]$ and hence bounded.
\end{Lem}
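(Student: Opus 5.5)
The proof mirrors the argument in Lemma \bref{lem:|G(x)-G(y)|<|x-y|} showing that $G_{\lambda,\gamma}$ maps $\Omega_{m+1}$ into itself. The only difference is that the inner argument of $P_{m+1}$ now involves $\wh{P}_{i,k}$ instead of $P_i$, and this turns out to be irrelevant. Fix $k\in\N\cup\{0\}$ and an arbitrary $x\in\Omega_{m+1}$. Set
\begin{equation*}
z:=(1-\gamma_k)x+\gamma_k\sum_{i=1}^m\beta_i\wh{P}_{i,k}(x),
\end{equation*}
which is some point of $X$; we need no information about where it lies. Since $P_{m+1}$ maps $X$ onto $\Omega_{m+1}$, we have $P_{m+1}(z)\in\Omega_{m+1}$. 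By \beqref{eq:G_k}, $G_k(x)=(1-\lambda_k)x+\lambda_kP_{m+1}(z)$. Because $\lambda_k\in(0,1]$, this is a convex combination of the two points $x$ and $P_{m+1}(z)$ of $\Omega_{m+1}$. The convexity of $\Omega_{m+1}$ therefore gives $G_k(x)\in\Omega_{m+1}$. This proves the first assertion.

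For the second assertion, assume that $x^0\in\Omega_{m+1}\subseteq B[0,\rho]$. We argue by induction on $k$. The base case $x^0\in\Omega_{m+1}$ is the hypothesis. If $x^k\in\Omega_{m+1}$, then by \beqref{eq:GBAP} and the first part, $x^{k+1}=G_k(x^k)\in\Omega_{m+1}$. Hence the whole sequence lies in $\Omega_{m+1}\subseteq B[0,\rho]$, so $\|x^k\|\leq\rho$ for all $k$, and the sequence is bounded.

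There is no real obstacle here. The one point needing a moment's care is that nothing about $\wh{P}_{i,k}$ or $\gamma_k$ is used: $\gamma_k$ may be any number in $(0,1]$, and $\wh{P}_{i,k}(x)$ need not lie in any particular set. Only $\lambda_k\in(0,1]$ and the fact that $P_{m+1}$ takes values in the convex set $\Omega_{m+1}$ enter the argument.
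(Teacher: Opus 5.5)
Your proof is correct and follows the paper's argument: $G_k(x)$ is a convex combination, with $\lambda_k\in(0,1]$, of $x\in\Omega_{m+1}$ and $P_{m+1}(z)\in\Omega_{m+1}$, so it lies in $\Omega_{m+1}$ by convexity, and induction on $k$ then gives $x^k\in\Omega_{m+1}\subseteq B[0,\rho]$ for all $k$. Your auxiliary point $z$ correctly includes the factor $\gamma_k$, which the paper's $y$ omits by a typo; as you note, this factor plays no role in the argument.
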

\begin{proof}
Fix $k\in\N\cup\{0\}$, let $x\in \Omega_{m+1}$ and let $y:=(1-\gamma_k)x+\sum_{i=1}^m\beta_i\wh{P}_{i,k}(x)\in X$. Then $P_{m+1}(y)\in \Omega_{m+1}$ and hence \beqref{eq:G_k} implies that $G_k(x)=(1-\lambda_k)x+\lambda_kP_{m+1}(y)\in \Omega_{m+1}$ as a convex combination of two elements in the convex set $\Omega_{m+1}$. Thus, $G_k$ maps $\Omega_{m+1}$ into itself for each $k\in\N\cup\{0\}$. Now assume that $x^0\in\Omega_{m+1}$. Then $x^1=G_0(x^0)\in \Omega_{m+1}$, by \beqref{eq:GBAP} and by what we have just proved. Thus, induction, as well as \beqref{eq:GBAP}, show that $x^k\in\Omega_{m+1}$ for each $k\in\N\cup\{0\}$. Since $\Omega_{m+1}$ is contained in $B[0,\rho]$, we conclude that $(x^k)_{k\in\N\cup\{0\}}$ is bounded. 
\end{proof}

\begin{Lem}\label{lem:UniformConvergence}
In the setting of Algorithm \bref{alg:MSGBAP}, suppose that $X$ is finite-dimensional. If there is an infinite subset $K$ of $\N\cup\{0\}$ such that the sequences of scalars $(\lambda_k)_{k\in K}$ and $(\gamma_k)_{k\in K}$ converge to the real numbers $\lambda\in [0,1]$ and $\gamma\in  [0,1]$, respectively, and if there is $\rho>0$ such that $\Omega_i\subseteq B[0,\rho]$ for all $i\in \{1,2,\ldots,m\}$, then the sequence of operators $(G_k)_{k\in K}$ defined by \beqref{eq:G_k} for all $k\in K$ converges uniformly to $G_{\lambda,\gamma}:=(1-\lambda)Id+\lambda P_{m+1}((1-\gamma)Id+\gamma\sum_{i=1}^m \beta_iP_{i})$ over $B[0,\rho]$.
\end{Lem}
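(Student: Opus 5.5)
We prove uniform convergence on $B[0,\rho]$ by a direct estimate of $\|G_k(x)-G_{\lambda,\gamma}(x)\|$, splitting the difference into pieces that are each controlled uniformly in $x\in B[0,\rho]$. For $x\in X$ and $k\in K$ we write
\[
y_k(x):=(1-\gamma_k)x+\gamma_k\sum_{i=1}^m\beta_i\wh{P}_{i,k}(x),\qquad y(x):=(1-\gamma)x+\gamma\sum_{i=1}^m\beta_iP_i(x),
\]
so that $G_k(x)=(1-\lambda_k)x+\lambda_kP_{m+1}(y_k(x))$ and $G_{\lambda,\gamma}(x)=(1-\lambda)x+\lambda P_{m+1}(y(x))$.

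\textbf{Step 1 (triangle inequality).} We insert the intermediate terms and obtain
\[
\|G_k(x)-G_{\lambda,\gamma}(x)\|\leq |\lambda_k-\lambda|\,\|x\|+|\lambda_k-\lambda|\,\|P_{m+1}(y(x))\|+\lambda_k\|P_{m+1}(y_k(x))-P_{m+1}(y(x))\|.
\]
Since $\lambda_k\leq 1$ and $P_{m+1}$ is nonexpansive, the last term is at most $\|y_k(x)-y(x)\|$.

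\textbf{Step 2 (bounding $\|y_k(x)-y(x)\|$).} We split this difference as
\[
\|y_k(x)-y(x)\|\leq |\gamma_k-\gamma|\Big(\|x\|+\sum_{i=1}^m\beta_i\|P_i(x)\|\Big)+\gamma_k\sum_{i=1}^m\beta_i\|\wh{P}_{i,k}(x)-P_i(x)\|.
\]
Since $P_i(x)\in\Omega_i\subseteq B[0,\rho]$, we have $\|P_i(x)\|\leq\rho$. Because $\gamma_k\leq1$ and the $\beta_i$ sum to $1$, the last sum is at most $\max_i\sup_{z\in B[0,\rho]}\|\wh{P}_{i,k}(z)-P_i(z)\|$. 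By Lemma \bref{lem:Lemma_27_CensorMansourReem2024jour}, applied with the given $\rho$ and using finite dimensionality, this supremum tends to $0$ as $k\to\infty$. It therefore tends to $0$ along $K$ as well, since a subsequence of a sequence converging to $0$ converges to $0$.

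\textbf{Step 3 (bounding $\|P_{m+1}(y(x))\|$).} We need a uniform bound on this quantity for $x\in B[0,\rho]$. First, $\|y(x)\|\leq|1-\gamma|\rho+\gamma\rho\leq 2\rho$. Fixing any $v\in\Omega_{m+1}$, nonexpansivity gives $\|P_{m+1}(y(x))\|\leq \|y(x)-v\|+\|v\|\leq 2\rho+2\|v\|$. This is a constant independent of $x$, so no boundedness of $\Omega_{m+1}$ is needed.

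\textbf{Conclusion.} Combining the three steps, $\sup_{x\in B[0,\rho]}\|G_k(x)-G_{\lambda,\gamma}(x)\|$ is bounded by
\[
C\big(|\lambda_k-\lambda|+|\gamma_k-\gamma|\big)+\max_i\sup_{z\in B[0,\rho]}\|\wh{P}_{i,k}(z)-P_i(z)\|,
\]
with $C$ depending only on $\rho$, $m$ and $v$. Each term tends to $0$ along $K$, which proves the claimed uniform convergence.

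The only nontrivial ingredient is the uniform convergence of $\wh{P}_{i,k}$ to $P_i$ on $B[0,\rho]$, which is supplied by Lemma \bref{lem:Lemma_27_CensorMansourReem2024jour}. Everything else is bookkeeping, the one point needing care being that the bounds stay uniform in $x$.
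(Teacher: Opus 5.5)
Your proposal is correct and follows essentially the same route as the paper. Both split $G_k(x)-G_{\lambda,\gamma}(x)$ by the triangle inequality into $|\lambda_k-\lambda|$-, $|\gamma_k-\gamma|$- and $\wh{P}_{i,k}-P_i$-pieces, and both get uniformity in $x$ from Lemma \bref{lem:Lemma_27_CensorMansourReem2024jour}. The only cosmetic difference is how you bound $\|P_{m+1}(y(x))\|$: you use an anchor point $v\in\Omega_{m+1}$, whereas the paper uses a common bound $M$ for all projections on $B[0,\rho]$ together with the observation that $y(x)\in B[0,\rho]$.
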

\begin{proof}
Let $\varepsilon>0$ be arbitrary. Since $P_i$ is nonexpansive for all $i\in\{1,2,\ldots,m+1\}$, these operators are bounded on the bounded set $B[0,\rho]$. Hence there is a positive number $M$ such that $\|P_i(x)\|<M$ for all $x\in B[0,\rho]$ and all $i\in\{1,2,\ldots,m+1\}$.

Since $\lambda=\lim_{k\in K}\lambda_k$ and $\gamma=\lim_{k\in K}\gamma_k$, there is $k_0\in K$ such that $|\lambda_k-\lambda|<(1/6)\varepsilon/(\rho+M)$ and $|\gamma_k-\gamma|<(1/6)\varepsilon/(\rho+M)$ for every $k_0\leq k\in K$.  Because $(\tau_{i,k})_{k=0}^{\infty}$ is a sequence of steering parameters for each $i\in\{1,\ldots,m+1\}$ and because $X$ is finite-dimensional, it follows from Lemma \bref{lem:Lemma_27_CensorMansourReem2024jour} that for each $i\in\{1,\ldots,m\}$ the sequence $(\wh{P}_{i,k})_{k=0}^{\infty}$ converges uniformly, over $B[0,\rho]$, to the orthogonal projection $P_{i}$ onto $\Omega_i$. Thus, its subsequence $(\wh{P}_{i,k})_{k\in K}$ also converges uniformly, over $B[0,\rho]$, to $P_{i}$ for each $i\in\{1,\ldots,m\}$, and hence there is $k_0\leq k_1\in K$ such that $\|\wh{P}_{i,k}(z)-P_i(z)\|<(1/6)\varepsilon$ for all $k_1\leq k\in K$ and all $i\in\{1,2,\ldots,m\}$ and all $z\in B[0,\rho]$.

These facts, as well as the nonexpansiveness of $P_{m+1}$, the fact that $\lambda_k\in [0,1]$ and $\gamma_k\in [0,1]$ for all $k\in K$, the fact that $(1-\gamma)x+\gamma\sum_{i=1}^m\beta_i P_i(x)\in B[0,\rho]$ for each $x\in B[0,\rho]$ (due to the convexity of $B[0,\rho]$, the inclusions $\Omega_i\subseteq B[0,\rho]$, $i\in\{1,2,\ldots,m\}$, because $\gamma\in [0,1]$ and because $(\beta_i)_{i=1}^m$ is a weight vector),  and simple algebra, all imply that for every $x\in B[0,\rho]$ and every $k_1\leq k\in K$,  
\begin{multline*}
\|G_k(x)-G_{\lambda,\gamma}(x)\|=\|((1-\lambda_k)x+\lambda_k P_{m+1}((1-\gamma_k)x+\gamma_k\sum_{i=1}^m\beta_i \wh{P}_{i,k}(x)))\\
-((1-\lambda)x+\lambda P_{m+1}((1-\gamma)x+\gamma\sum_{i=1}^m\beta_i P_i(x)))\|\\
\leq |\lambda_k-\lambda|\|x\|+\lambda_k\|P_{m+1}((1-\gamma_k)x+\gamma_k\sum_{i=1}^m\beta_i\wh{P}_{i,k}(x))-P_{m+1}((1-\gamma)x+\gamma\sum_{i=1}^m\beta_i P_i(x))\|\\
+\|\lambda_k P_{m+1}((1-\gamma)x+\gamma\sum_{i=1}^m\beta_i P_i(x))-\lambda P_{m+1}((1-\gamma)x+\gamma\sum_{i=1}^m\beta_i P_i(x))\|\\
\leq |\lambda_k-\lambda|\|x\|+\lambda_k\|((1-\gamma_k)x+\gamma_k\sum_{i=1}^m\beta_i\wh{P}_{i,k}(x))-((1-\gamma)x+\gamma\sum_{i=1}^m\beta_i P_i(x))\|+\\
|\lambda_k-\lambda|\|P_{m+1}((1-\gamma)x+\gamma\sum_{i=1}^m\beta_i P_i(x)))\|\\
< \frac{(1/6)\varepsilon\rho}{\rho+M}+\|\gamma_k-\gamma\|\|x\|+\|\gamma_k\sum_{i=1}^m\beta_i\wh{P}_{i,k}(x)-\gamma\sum_{i=1}^m\beta_i P_i(x)\|+\frac{(1/6)\varepsilon M}{\rho+M}\\
< \frac{1}{6}\varepsilon+\frac{(1/6)\varepsilon\rho}{\rho+M}+\gamma_k\|\sum_{i=1}^m\beta_i\wh{P}_{i,k}(x)-\sum_{i=1}^m\beta_i P_i(x)\|+|\gamma_k-\gamma|\sum_{i=1}^m\beta_i\|P_i(x)\|+\frac{1}{6}\varepsilon\\
< \frac{3}{6}\varepsilon+\sum_{i=1}^m\beta_i\|\wh{P}_{i,k}(x)-P_i(x)\|+\frac{(1/6)\varepsilon M}{\rho+M}<\frac{3}{6}\varepsilon+\frac{1}{6}\varepsilon+\frac{1}{6}\varepsilon<\varepsilon.
\end{multline*}
Since $\varepsilon$ was an arbitrary positive number we conclude that $G_{\lambda,\gamma}=\lim_{k\to \infty, k\in K}G_k$ uniformly on $B[0,\rho]$, as claimed. 
\end{proof}

\begin{Lem}\label{lem:ExistenceUniqueness}
 In the setting of Algorithm \bref{alg:MSGBAP}, if either $\Omega_{m+1}$ is bounded or $\Omega_{m+1}$ is unbounded and $\Omega_q$ is bounded for some $q\in \{1,2,\ldots,m\}$, then \beqref{eq:MinProb} has at least one solution. If, in addition, $\Omega_s$ is strictly convex for some $s\in \{1,2,\ldots,m\}$ and also $\Omega_s\cap \Omega_{m+1}=\emptyset$, then \beqref{eq:MinProb} has a unique solution $x^*$, and $x^*$ has the property that it is the unique fixed point of each of the functions $G_{\lambda,\gamma}:X\to X$, $\lambda\in (0,1]$, $\gamma\in (0,2)$ defined in \beqref{eq:G_lambda_gamma}.
\end{Lem}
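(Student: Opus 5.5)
The plan is to assemble Lemma \bref{lem:ExistenceUniqueness} from the preparatory lemmas, in three steps.

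\textbf{Existence.} Under the boundedness hypothesis (either $\Omega_{m+1}$ is bounded, or it is unbounded and some $\Omega_q$ is bounded), Lemma \bref{lem:Existence_Min} applies directly. It gives a minimizer of $\Phi$ over $\Omega_{m+1}$, so \beqref{eq:MinProb} has at least one solution.

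\textbf{Uniqueness.} Assume in addition that $\Omega_s$ is strictly convex and $\Omega_s\cap\Omega_{m+1}=\emptyset$. Fix any $\lambda\in(0,1]$ and $\gamma\in(0,2)$. By Lemma \bref{lem:FixedPointMin},
\[
\textnormal{Argmin}_{\Omega_{m+1}}(\Phi)=\textnormal{Fix}_X(G_{\lambda,\gamma}),
\]
since $\lambda\neq 0$ and $\gamma>0$. By Lemma \bref{lem:|G(x)-G(y)|<|x-y|}, the operator $G_{\lambda,\gamma}$ is Edelstein-contractive on $\Omega_{m+1}$. Hence it has at most one fixed point in $X$, and that fixed point lies in $\Omega_{m+1}$. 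Combining these two facts, the solution set of \beqref{eq:MinProb} has at most one element. Together with existence, the solution is unique; call it $x^*$.

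\textbf{Independence from the parameters.} The solution set does not depend on $(\lambda,\gamma)$: it is $\textnormal{Argmin}_{\Omega_{m+1}}(\Phi)$. Therefore the same $x^*$ is the unique fixed point of every $G_{\lambda,\gamma}$ with $\lambda\in(0,1]$ and $\gamma\in(0,2)$. Alternatively, the last assertion of Lemma \bref{lem:|G(x)-G(y)|<|x-y|} already gives existence and uniqueness of the fixed point for each such pair, and Lemma \bref{lem:FixedPointMin} identifies it with the minimizer.

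None of these steps is a real obstacle, since the heavy lifting was done in the earlier lemmas. The one point to check is that the parameter ranges match. The hypotheses $\lambda\neq 0$, $\gamma>0$ of Lemma \bref{lem:FixedPointMin} and $\lambda\in(0,1]$, $\gamma\in(0,2)$ of Lemma \bref{lem:|G(x)-G(y)|<|x-y|} must both cover the range used here. Both do, so the argument goes through.
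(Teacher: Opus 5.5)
Your proposal is correct and follows the paper's proof: existence comes from Lemma \bref{lem:Existence_Min}, and uniqueness together with the fixed-point characterization comes from combining Lemma \bref{lem:FixedPointMin} ($\textnormal{Argmin}_{\Omega_{m+1}}(\Phi)=\textnormal{Fix}_X(G_{\lambda,\gamma})$ for all $\lambda\neq 0$, $\gamma>0$) with Lemma \bref{lem:|G(x)-G(y)|<|x-y|} (at most one fixed point when $\lambda\in(0,1]$, $\gamma\in(0,2)$). You spell out the combination step and the parameter-range check, which the paper leaves implicit.
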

\begin{proof}
Existence of a solution to \beqref{eq:MinProb} in $\Omega_{m+1}$ when either $\Omega_{m+1}$ is bounded or when it is unbounded and $\Omega_q$ is bounded for some $q\in\{1,2,\ldots,m\}$ follows from Lemma \bref{lem:Existence_Min}. Uniqueness of the solution to \beqref{eq:MinProb} when there is some $s\in\{1,2,\ldots,m\}$ such that $\Omega_s$ is strictly convex and $\Omega_s\cap\Omega_{m+1}=\emptyset$, as well as the fact that this unique solution is a fixed point of $G_{\lambda,\gamma}$ for each $\lambda\in (0,1]$ and $\gamma\in (0,2)$, follow from the combination of Lemma \bref{lem:FixedPointMin} and Lemma \bref{lem:|G(x)-G(y)|<|x-y|}.
\end{proof}

\begin{Lem}\label{lem:subsequence_x^*}
Suppose that all the conditions mentioned in Theorem \bref{thm:Main} hold. Then there exists an infinite subset $K$ of $\N\cup\{0\}$ and a subsequence $(x^k)_{k\in K}$ of $(x^k)_{k=0}^{\infty}$ which converges to $x^*$. Moreover, $\lim_{k\to\infty, k\in K}\|x^{k+1}-x^k\|=0$.
\end{Lem}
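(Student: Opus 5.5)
Let $\lambda_\infty$ and $\gamma_\infty$ be as in Theorem \bref{thm:Main}, and put $G_\infty:=G_{\lambda_\infty,\gamma_\infty}$. The plan is to compare the algorithm with the Edelstein-contractive limit operator $G_\infty$ and to use compactness of $\Omega_{m+1}$ to turn Edelstein-contractivity into a uniform decrease of the distance to $x^*$ away from $x^*$. This decrease will be enough to show that $\liminf_{k\to\infty}\|x^k-x^*\|=0$, without any Fej\'er monotonicity.

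\textbf{Preparations.} Since $\lambda_k,\gamma_k\in(0,1]$, we have $\lambda_\infty\in(0,1]$ and $\gamma_\infty\in(0,1]\subset(0,2)$.
\begin{enumerate}[(a)]
\item By Lemma \bref{lem:ExistenceUniqueness}, $x^*$ exists, is unique, and satisfies $G_\infty(x^*)=x^*$.
\item By Lemma \bref{lem:|G(x)-G(y)|<|x-y|} (all $\Omega_i$ are strictly convex and disjoint from $\Omega_{m+1}$), $G_\infty$ is Edelstein-contractive on $\Omega_{m+1}$.
\item By Lemma \bref{lem:x^k_is_bounded}, $x^k\in\Omega_{m+1}\subseteq B[0,\rho]$ for all $k$.
\item By Lemma \bref{lem:UniformConvergence}, applied with $K:=\N\cup\{0\}$ (its hypotheses hold because $X$ is finite-dimensional and $\Omega_i\subseteq B[0,\rho]$), the quantity $e_k:=\sup_{x\in B[0,\rho]}\|G_k(x)-G_\infty(x)\|$ tends to $0$.
\end{enumerate}
Using $x^{k+1}=G_k(x^k)$ and $x^*=G_\infty(x^*)$, we then get
\begin{equation*}
\|x^{k+1}-x^*\|\leq \|G_\infty(x^k)-G_\infty(x^*)\|+e_k .
\end{equation*}

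\textbf{Main step: $L:=\liminf_{k\to\infty}\|x^k-x^*\|=0$.} Suppose, for contradiction, that $L>0$. The set $C:=\{x\in\Omega_{m+1}\,|\,\|x-x^*\|\geq L/2\}$ is compact, since $X$ is finite-dimensional and $\Omega_{m+1}$ is closed and bounded. Assume first that $C\neq\emptyset$. The function
\begin{equation*}
h(x):=\|x-x^*\|-\|G_\infty(x)-G_\infty(x^*)\|
\end{equation*}
is continuous, and it is strictly positive on $C$ by Edelstein-contractivity, because $x\neq x^*$ there. Hence $\delta:=\min_C h>0$. Choose $k_0$ such that $e_k<\delta/2$ for every $k\geq k_0$. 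Then for each $k\geq k_0$ with $x^k\in C$ we obtain
\begin{equation*}
\|x^{k+1}-x^*\|\leq \|x^k-x^*\|-\delta/2 .
\end{equation*}
Because $L>0$, there is $k_1\geq k_0$ such that $\|x^k-x^*\|>3L/4\geq L/2$ for all $k\geq k_1$. Thus $x^k\in C$ for every $k\geq k_1$, and the nonnegative quantities $\|x^k-x^*\|$ decrease by at least $\delta/2$ at each such step, which is impossible. If $C=\emptyset$, then every $x^k$ lies within $L/2$ of $x^*$, which already contradicts $L>0$. Hence $L=0$, and we obtain an infinite $K\subseteq\N\cup\{0\}$ with $\lim_{k\in K}x^k=x^*$.

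\textbf{The second claim.} I will apply Lemma \bref{lem:g_k(x^k)} with $X_1:=B[0,\rho]$ (which contains every $x^k$ and $x^*$), to the functions $g_k:=G_k$ for $k\in K$. These converge uniformly to $G_\infty$ on $B[0,\rho]$, and they are continuous as compositions of continuous maps ($\wh{P}_{i,k}$ is continuous because each $P_{i,j}$ is nonexpansive). The lemma gives $x^{k+1}=G_k(x^k)\to G_\infty(x^*)=x^*$ as $k\to\infty$ with $k\in K$. Consequently
\begin{equation*}
\|x^{k+1}-x^k\|\leq\|x^{k+1}-x^*\|+\|x^*-x^k\|\to 0\quad (k\to\infty,\ k\in K).
\end{equation*}

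The main obstacle is the middle step, since the $e_k$ need not be summable and no monotonicity of $\|x^k-x^*\|$ is available. The compactness argument that produces the uniform margin $\delta$ is exactly what compensates for this.
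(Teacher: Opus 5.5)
Your proof is correct, and it takes a genuinely different route from the paper's.

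\textbf{The paper's route.} The paper argues through the step lengths $\|x^{k+1}-x^k\|$. Assuming the iterates eventually avoid $B(x^*,\varepsilon)$, it first shows $\liminf_k\|x^{k+1}-x^k\|>0$: otherwise a limit point along vanishing steps would be a fixed point of $G_\infty$, hence equal to $x^*$. It then takes a limit point $z$ of a subsequence along which the steps tend to their lower limit $r>0$. Passing to the limit in two consecutive steps gives $r\leq\|G_\infty^2(z)-G_\infty(z)\|<\|G_\infty(z)-z\|=r$. This is the classical Edelstein orbit argument. It needs only the qualitative limits $G_k(x^k)\to G_\infty(z)$ and the uniqueness of the fixed point.

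\textbf{Your route.} You track $\|x^k-x^*\|$ directly. Compactness of $\{x\in\Omega_{m+1}:\|x-x^*\|\geq L/2\}$ turns Edelstein-contractivity into a uniform margin $\delta$, which absorbs the non-summable errors $e_k$ once $e_k<\delta/2$.

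\textbf{What each buys.} Your argument is single-stage. It also avoids a delicate point in the paper's version. There, $r$ is taken as $\inf_{k\geq k_1}\|x^{k+1}-x^k\|$, justified by the claim that the steps eventually dominate their $\liminf$, which is false in general. The repair is simply to set $r:=\liminf_k\|x^{k+1}-x^k\|$.

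Your argument also gives more than the lemma. Run the same margin argument on $\{x\in\Omega_{m+1}:\|x-x^*\|\geq\varepsilon/2\}$ with $e_k<\min\{\delta,\varepsilon/2\}$. This shows that $B(x^*,\varepsilon)$ is eventually invariant under the iteration. Together with your convergent subsequence, it yields $x^k\to x^*$ outright. The paper obtains this separately in the proof of Theorem \bref{thm:Main}, using a margin built from the individual projections $P_i$ rather than from $G_\infty$.
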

\begin{proof}
Lemma \bref{lem:ExistenceUniqueness} ensures the existence and uniqueness of the solution $x^*$ to \beqref{eq:MinProb}, as well as the fact that it is the unique fixed point of $G_{\lambda,\gamma}$ for each of the functions $G_{\lambda,\gamma}:X\to X$, $\lambda\in (0,1]$, $\gamma\in (0,2)$ defined in \beqref{eq:G_lambda_gamma}. 

Now assume for a contradiction that no subsequence of $(x^k)_{k=0}^{\infty}$ converges to $x^*$. Then there is an $\varepsilon>0$ such that only finitely many elements of the sequence  $(x^k)_{k=0}^{\infty}$ are in the open ball $B(x^*,\varepsilon)$, and so there is $k_0\in\N\cup\{0\}$ such that $x^k\notin B(x^*,\varepsilon)$ for all $k_0\leq k\in\N\cup\{0\}$. We claim that this implies that $\liminf_{k\to\infty} \|x^{k+1}-x^k\|>0$. 

Indeed, if this is false, then this assumption, as well as the fact that the sequence $(\|x^{k+1}-x^k\|_{k\in\N\cup\{0\}})$ is a sequence of nonnegative numbers, imply that there is an infinite set $N_1$ of $\{k_0,k_0+1,k_0+2,\ldots\}$ such that $\lim_{k\to\infty, k\in N_1}\|x^{k+1}-x^k\|=0$. Since $(x^k)_{k\in N_1}$ is contained in the bounded set $\Omega_{m+1}$, it has a convergent subsequence $(x^k)_{k\in N_2}$ (where $N_2$ is an infinite subset of $N_1$) whose limit, which we denote by $x^{\infty,2}$, is in the closed set $\Omega_{m+1}$. 

Knowing that $x^{k+1}=G_k(x^k)$ for every $k\in\N\cup\{0\}$ (see \beqref{eq:GBAP}), we conclude from Lemma \bref{lem:g_k(x^k)} and Lemma \bref{lem:UniformConvergence} (with $K:=\N\cup\{0\}$ there) that
\begin{equation*}
\lim_{k\to \infty, k\in N_2}\|x^{k+1}-x^k\|=\lim_{k\to \infty, k\in N_2}\|G_k(x^{k})-x^k\|=\|G_{\lambda_{\infty},\gamma_{\infty}}(x^{\infty,2})-x^{\infty,2}\|,
\end{equation*}
where $0<\lambda_{\infty}=\lim_{k\to\infty}\lambda_k$ and $0<\gamma_{\infty}=\lim_{k\to\infty}\gamma_k$ as we assume in the formulation of Theorem \bref{thm:Main} (since $\lambda_k\in (0,1]$ and $\gamma_k\in (0,1]$ for each $k\in\N\cup\{0\}$, we also have $\lambda_{\infty}\leq 1$ and $\gamma_{\infty}\leq 1$). Since $N_2\subseteq N_1$, we have $\lim_{k\to \infty, k\in N_2}\|x^{k+1}-x^k\|=\lim_{k\to \infty, k\in N_1}\|x^{k+1}-x^k\|=0$. Hence, $\|G_{\lambda_{\infty},\gamma_{\infty}}(x^{\infty,2})-x^{\infty,2}\|=0$, namely, $x^{\infty,2}$ is a fixed point of $G_{\lambda_{\infty},\gamma_{\infty}}$.

The operator $G_{\lambda_{\infty},\gamma_{\infty}}$ has a unique fixed point in $\Omega_{m+1}$ according to Lemma \bref{lem:|G(x)-G(y)|<|x-y|}, and this fixed point is $x^*$ according to Lemma \bref{lem:FixedPointMin}. This yields $x^{\infty,2}=x^*$ which is a contradiction because the fact that $x^k\notin B(x^*,\varepsilon)$ for all $k_0\leq k\in\N\cup\{0\}$ implies that $\|x^k-x^*\|\geq \varepsilon$ for all $k_0\leq k\in\N\cup\{0\}$, and hence $\varepsilon\leq \lim_{k\to\infty, k\in N_2}\|x^k-x^*\|=\|x^{\infty,2}-x^*\|$. Consequently, $\liminf_{k\to\infty} \|x^{k+1}-x^k\|>0$, as claimed, and hence there is a nonnegative integer $k_1\geq k_0$ such that $\|x^{k+1}-x^k\|\geq \liminf_{\ell\to\infty} \|x^{\ell+1}-x^{\ell}\|>0$ for all $k_1\leq k\in\N\cup\{0\}$. 

Denote $r:=\inf\{\|x^{k+1}-x^k\|\,|\, k_1\leq k\in \N\cup\{0\}\}$. According to what we showed in the previous paragraph $r>0$. Additionally, the definition of $r$ implies that there is an infinite subset $N_3$ of $\{k_1,k_1+1,k_1+2,k_1+3,\ldots\}$ such that $r=\lim_{k\to\infty, k\in N_3} \|x^{k+1}-x^k\|$. The sequence $(x^k)_{k\in N_3}$ is contained in the compact set $\Omega_{m+1}$ and hence has an accumulation point $x^{\infty,3}\in \Omega_{m+1}$, namely $x^{\infty,3}=\lim_{k\to\infty, k\in N_4}x^k$ for some infinite subset $N_4$ of $N_3$. Lemma \bref{lem:g_k(x^k)} and Lemma \bref{lem:UniformConvergence} imply that 
\begin{equation}\label{r=lim}
r=\lim_{k\to\infty, k\in N_4}\|x^{k+1}-x^k\|=\lim_{k\to\infty, k\in N_4}\|G_k(x^k)-x^k\|=\|G_{\lambda_{\infty},\gamma_{\infty}}(x^{\infty,3})-x^{\infty,3}\|. 
\end{equation}
Since $r>0$, we conclude that $x^{\infty,3}$ is not a fixed point of $G_{\lambda_{\infty},\gamma_{\infty}}$. In addition, since $G_{\lambda_{\infty},\gamma_{\infty}}(x^{\infty,3})=\lim_{k\to\infty, k\in N_4}G_k(x^k)$ as follows from Lemma \bref{lem:g_k(x^k)} and Lemma \bref{lem:UniformConvergence} (with $K:=\N\cup\{0\}$), another application of these lemmata (again with $K:=\N\cup\{0\}$) implies that 
\begin{equation*}
\|x^{k+2}-x^{k+1}\|=\|G_{k+1}(G_{k}(x^k))-G_{k}(x^k)\|\xrightarrow[k\to\infty, k\in N_4]{}\|G_{\lambda_{\infty},\gamma_{\infty}}^2(x^{\infty,3})-G_{\lambda_{\infty},\gamma_{\infty}}(x^{\infty,3})\|.
\end{equation*} 

On the one hand, the definition of $r$ implies that $r\leq \|x^{k+2}-x^{k+1}\|$ for all $k_1\leq k\in\N\cup\{0\}$, and, in particular, $r\leq \|G_{\lambda_{\infty},\gamma_{\infty}}^2(x^{\infty,3})-G_{\lambda_{\infty},\gamma_{\infty}}(x^{\infty,3})\|$. On the other hand the fact that $G_{\lambda_{\infty},\gamma_{\infty}}$ is Edelstein-contractive on $\Omega_{m+1}$ according to Lemma \bref{lem:|G(x)-G(y)|<|x-y|}, together with \beqref{r=lim} and the facts that $G_{\lambda_{\infty},\gamma_{\infty}}(x^{\infty,3})\neq x^{\infty,3}$, $x^{\infty,3}\in \Omega_{m+1}$ and $G_{\lambda_{\infty},\gamma_{\infty}}(x^{\infty,3})\in \Omega_{m+1}$ (since Lemma \bref{lem:|G(x)-G(y)|<|x-y|} implies that $G_{\lambda_{\infty},\gamma_{\infty}}$ maps $\Omega_{m+1}$ into itself), all imply that $\|G_{\lambda_{\infty},\gamma_{\infty}}^2(x^{\infty,3})-G_{\lambda_{\infty},\gamma_{\infty}}(x^{\infty,3})\|<\|G_{\lambda_{\infty},\gamma_{\infty}}(x^{\infty,3})-x^{\infty,3}\|=r$. We conclude that $r\leq \|G_{\lambda_{\infty},\gamma_{\infty}}^2(x^{\infty,3})-G_{\lambda_{\infty},\gamma_{\infty}}(x^{\infty,3})\|<r$, a contradiction. 

The previous paragraph shows that the initial assumption that there does not exist a subsequence of $(x^k)_{k=0}^{\infty}$ which converges to $x^*$ cannot hold, namely there exists a subsequence of $(x^k)_{k=0}^{\infty}$ which converges to $x^*$. Thus, there exists an infinite subset $K$ of $\N\cup\{0\}$ such that $\lim_{k\to\infty, k\in K}x^k=x^*$, and therefore Lemma \bref{lem:g_k(x^k)} and Lemma \bref{lem:UniformConvergence}, as well as Lemma \bref{lem:FixedPointMin}, imply that 
\begin{equation*}
\lim_{k\to\infty, k\in K}\|x^{k+1}-x^k\|=\lim_{k\to\infty, k\in K}\|G_k(x^k)-x^k\|=\|G_{\lambda_{\infty},\gamma_{\infty}}(x^*)-x^*\|=0.
\end{equation*}
\end{proof}

Now we are ready to present the proof of Theorem \bref{thm:Main}.
\begin{proof}[Proof of Theorem \bref{thm:Main}]
Lemma \bref{lem:ExistenceUniqueness} addresses the issues of existence of a solution to \beqref{eq:MinProb} under the conditions mentioned in the formulation of Theorem \bref{thm:Main}, its uniqueness under further assumptions, as well as the fact that it is the unique fixed point of each of the functions $G_{\lambda,\gamma}:X\to X$, $\lambda\in (0,1]$, $\gamma\in (0,2)$ defined in \beqref{eq:G_lambda_gamma}. 

It remains to show that $(x^k)_{k=0}^{\infty}$ converges to the unique solution $x^*\in\Omega_{m+1}$ to \beqref{eq:MinProb} when the following assumptions are imposed: $x^0\in\Omega_{m+1}$, the space $X$ is finite-dimensional, there is some $\rho>0$ such that $\Omega_{m+1}\cup(\cup_{i=1}^m\cup_{j=1}^{m_i}\Omega_{i,j})\subseteq B[0,\rho]$, $\Omega_i$ is strictly convex and satisfies $\Omega_i\cap \Omega_{m+1}=\emptyset$ for each $i\in\{1,2,\ldots,m\}$, and $\lambda_{\infty}:=\lim_{k\to\infty}\lambda_k$ and $\gamma_{\infty}:=\lim_{k\to\infty}\gamma_k$ exist and are positive (and hence they satisfy $\lambda_{\infty}\leq 1$ and $\gamma_{\infty}\leq 1$ since $\lambda_k\in (0,1]$ and $\gamma_k\in (0,1]$ for every $k\in\N\cup\{0\}$). 

Since $\Omega_{m+1}$ is closed and bounded and $X$ is finite-dimensional, both $\Omega_{m+1}$ and $\Omega_{m+1}^2$ are compact metric spaces, where the metrics are induced by the norm. Now let $\varepsilon>0$ be arbitrary. Denote 
\begin{equation}\label{eq:A_{0.5epsilon}}
A_{0.5\varepsilon}:=\{(x,y)\in \Omega_{m+1}^2\,|\, \|x-y\|\geq 0.5\varepsilon\}.
\end{equation}
This is a closed set of the compact metric space $\Omega_{m+1}^2$ and hence a compact metric space. Therefore, for all $i\in\{1,2,\ldots,m\}$ the continuous function $f_i:A_{0.5\varepsilon}\to \R$, defined by $f_i(x,y):=\|x-y\|-\|P_{i}(x)-P_i(y)\|$ for all $(x,y)\in A_{0.5\varepsilon}$, attains a minimum over $A_{0.5\varepsilon}$ at some point $(\wt{x}(i),\wt{y}(i))$. 

Since $\Omega_i\cap \Omega_{m+1}=\emptyset$ and $\Omega_i$ is strictly convex for every $i\in \{1,2,\ldots,m\}$, Lemma \bref{lem:|P_S(x)-P_S(y)|<|x-y|} implies that $P_i$ is Edelstein-contractive on $\Omega_{m+1}$ for all $i\in \{1,2,\ldots,m\}$. Since $\|\wt{x}(i)-\wt{y}(i)\|\geq 0.5\varepsilon$ by the definition of $A_{0.5\varepsilon}$, it follows that $\wt{x}(i)\neq \wt{y}(i)$ and, hence, $\|P_i(\wt{x}(i))-P_i(\wt{y}(i))\|<\|\wt{x}(i)-\wt{y}(i)\|$. Therefore, 
\begin{equation}
0<\|\wt{x}(i)-\wt{y}(i)\|-\|P_i(\wt{x}(i))-P_i(\wt{y}(i))\|=f_i(\wt{x}(i),\wt{y}(i))\leq \|x-y\|-\|P_{i}(x)-P_i(y)\|
\end{equation}
for all $(x,y)\in A_{0.5\varepsilon}$, and hence $r_{\varepsilon}:=\min_{i\in\{1,2,\ldots,m\}}\{f_i(\wt{x}(i),\wt{y}(i)), 0.5\varepsilon\}$ satisfies both $r_{\varepsilon}\leq 0.5\varepsilon$ and
\begin{equation}\label{eq:r_epsilon}
0<r_{\varepsilon}\leq \|x-y\|-\|P_{i}(x)-P_i(y)\|,\quad \forall (x,y)\in A_{0.5\varepsilon}\,\,\textnormal{and}\,\,\forall i\in\{1,\ldots,m\}. 
\end{equation}

We know from Lemma \bref{lem:Lemma_27_CensorMansourReem2024jour} that for each $i\in\{1,2,\ldots,m\}$ the sequence $(\wh{P}_{i,k})_{k=0}^{\infty}$ converges uniformly, over $B[0,\rho]$, to the orthogonal projection $P_{i}$ onto $\Omega_i$. Hence, in particular, $(\wh{P}_{i,k})_{k=0}^{\infty}$ converges uniformly to $P_i$ over $\Omega_{m+1}$ and, therefore, there is $k_0\in \N\cup\{0\}$ such that $\|\wh{P}_{i,k}(x)-P_i(x)\|<r_{\varepsilon}$ for all $x\in \Omega_{m+1}$, all $i\in\{1,2,\ldots,m\}$ and all $k_0\leq k\in\N\cup\{0\}$. 

Lemma \bref{lem:subsequence_x^*} implies that there is an infinite subset $K$ of $\N\cup\{0\}$ such that $x^*=\lim_{k\to\infty, k\in K}x^k$. Thus there is $k_0\leq k_1\in K$ such that $x^{k_1}\in B(x^*,\varepsilon)$. We show by induction that $x^k\in B(x^*,\varepsilon)$ for all $k_1\leq k\in\N\cup\{0\}$. 

Indeed, for $k=k_1$ this is the induction hypothesis. Assume now that the assertion is true up to $k_1\leq k\in\N\cup\{0\}$. We need to show that it holds also for $k+1$. Due to $x^*=G_{\lambda_{\infty},\gamma_{\infty}}(x^*)=(1-\lambda_{\infty})x^*+\lambda_{\infty} P_{m+1}((1-\gamma_{\infty})x^*+\gamma_{\infty}\sum_{i=1}^mP_{i}(x^*))$ according to Lemma \bref{lem:FixedPointMin}, we conclude from \beqref{eq:GBAP}, simple arithmetic, the triangle inequality, the nonexpansivity of the orthogonal projections $P_{i}$ (for each $i\in\{1,2,\ldots,m+1\}$), from the fact that $(\lambda_{\infty},\gamma_{\infty})\in (0,1]^2$, from the fact that $\beta=(\beta_i)_{i=1}^m$ is a positive weight vector, from the fact that $x^*\in \Omega_{m+1}$, and from the fact that $x^k\in \Omega_{m+1}$ (according to Lemma \bref{lem:x^k_is_bounded}) and hence $\|\wh{P}_{i,k}(x^k)-P_i(x^k)\|<r_\varepsilon$ (since $k_0\leq k_1\leq k$), that 
\begin{equation}\label{eq:x^(k+1)-x^*}
\begin{split}
\|x^{k+1}-x^*\|&=\|((1-\lambda_{\infty})x^k+\lambda_{\infty} P_{m+1}((1-\gamma_{\infty})x^k+\gamma_{\infty}\sum_{i=1}^m\wh{P}_{i,k}(x^k))\\
&-((1-\lambda_{\infty})x^*+\lambda_{\infty} P_{m+1}((1-\gamma_{\infty})x^*+\gamma_{\infty}\sum_{i=1}^mP_{i}(x^*)))\|\\
&\leq (1-\lambda_{\infty})\|x^{k}-x^*\|+\lambda_{\infty}\|P_{m+1}((1-\gamma_{\infty})x^k+\gamma_{\infty}\sum_{i=1}^m\beta_i\wh{P}_{i,k}(x^k))\\
&-P_{m+1}((1-\gamma_{\infty})x^*+\gamma_{\infty}\sum_{i=1}^m\beta_iP_{i}(x^*))\|\\
&\leq (1-\lambda_{\infty})\varepsilon+\lambda_{\infty}((1-\gamma_{\infty})\|x^k-x^*\|+\gamma_{\infty}\sum_{i=1}^m\beta_i\|\wh{P}_{i,k}(x^k)-P_{i}(x^*)\|)\\
&\leq (1-\lambda_{\infty})\varepsilon+\lambda_{\infty}(1-\gamma_{\infty})\|x^k-x^*\|\\&
+\lambda_{\infty}\gamma_{\infty}\sum_{i=1}^m\beta_i (\|\wh{P}_{i,k}(x^k)-P_{i}(x^k)\|+\|P_i(x^k)-P_i(x^*)\|)\\
&\leq (1-\lambda_{\infty})\varepsilon+\lambda_{\infty}(1-\gamma_{\infty})\varepsilon+\lambda_{\infty}\gamma_{\infty}\sum_{i=1}^m\beta_i(r_{\varepsilon}+\|P_i(x^k)-P_i(x^*)\|).
\end{split}
\end{equation}
We claim that $r_{\varepsilon}+\|P_i(x^k)-P_i(x^*)\|<\varepsilon$ for all $i\in\{1,2,\ldots,m\}$. Indeed, if we are in the case $\|x^k-x^*\|<0.5\varepsilon$, then the nonexpansivity of $P_i$ (for all $i\in\{1,2,\ldots,m\}$) and the fact that $r_{\varepsilon}\leq 0.5\varepsilon$ imply that $r_{\varepsilon}+\|P_i(x^k)-P_i(x^*)\|\leq 0.5\varepsilon+\|x^k-x^*\|< 0.5\varepsilon+0.5\varepsilon=\varepsilon$. If we are in the case $\|x^k-x^*\|\geq 0.5\varepsilon$, then $(x^k,x^*)\in A_{0.5\varepsilon}$ (see \beqref{eq:A_{0.5epsilon}}). This fact, as well as the fact that $\|x^k-x^*\|<\varepsilon$ and the fact that $r_{\varepsilon}\leq \|x^k-x^*\|-\|P_i(x^k)-P_i(x^*)\|$ for all $i\in\{1,2,\ldots,m\}$ (see \beqref{eq:r_epsilon}), imply that $r_{\varepsilon}<\varepsilon-\|P_i(x^k)-P_i(x^*)\|$ for all $i\in\{1,2,\ldots,m\}$. Hence $r_{\varepsilon}+\|P_i(x^k)-P_i(x^*)\|< \varepsilon$ for all $i\in\{1,2,\ldots,m\}$ in this case too. 

Therefore, \beqref{eq:x^(k+1)-x^*}, as well as the facts that $(\beta_i)_{i=1}^m$ is a positive weight vector and both $\lambda_{\infty}$ and $\gamma_{\infty}$ are positive, imply that 
\begin{equation*}
\|x^{k+1}-x^*\|<(1-\lambda_{\infty})\varepsilon+\lambda_{\infty}(1-\gamma_{\infty})\varepsilon+\lambda_{\infty}\gamma_{\infty} \sum_{i=1}^m \beta_i \varepsilon=\varepsilon,
\end{equation*}
namely $x^{k+1}\in B(x^*,\varepsilon)$. We conclude by induction that $x^k\in B(x^*,\varepsilon)$ for all $k_1\leq k\in\N\cup\{0\}$, and since $\varepsilon$ was an arbitrary positive number it follows that $(x^k)_{k=0}^{\infty}$ converges to the unique solution $x^*$ of \beqref{eq:MinProb}, as required.
\end{proof}

\begin{remark}\label{rem:Better_Than_whP}
A careful examination of the proof of Theorem \bref{thm:Main} and the lemmata on which it is based, shows that for each $i\in \{1,2,\ldots,m\}$ if the sequence of operators $(\wh{P}_{i,k})_{k=0}^{\infty}$ which is defined in \beqref{eq:ApproxProjection} is replaced by any sequence of operators which converges uniformly to $P_i$ over $B[0,\rho]$, then the conclusion of Theorem \bref{thm:Main} still holds. 
\end{remark}

{\noindent \textbf{Acknowledgements}: } This work was supported by U.S. National Institutes of Health (NIH) Grant Number R01CA266467 and by the Cooperation Program in Cancer Research of the German Cancer Research Center (DKFZ) and Israel\textquoteright s Ministry of Innovation, Science and Technology (MOST).

\bibliographystyle{acm}
\bibliography{biblio}
\end{document}